\documentclass[oneside,11pt,reqno]{amsart}
\usepackage{amssymb,amsmath,amsthm,bbm,enumerate,mdwlist,url,multirow,hyperref}
\usepackage[pdftex]{graphicx}
\usepackage[shortlabels]{enumitem}

\addtolength{\hoffset}{-1.5cm}
\textwidth 16.5cm
\addtolength{\textheight}{1.8cm}
\topmargin -0.2cm
\sloppy

\linespread{1.3}
\theoremstyle{definition}
\newtheorem{definition}{Definition}
\theoremstyle{theorem}
\newtheorem{proposition}[definition]{Proposition}
\newtheorem{lemma}[definition]{Lemma}
\newtheorem{theorem}[definition]{Theorem}

\numberwithin{equation}{section}
\numberwithin{definition}{section}
\theoremstyle{remark}
\newtheorem{remark}[definition]{Remark}
\def\PP{\mathsf P}
\def\QQ{\mathsf Q}
\def\AA{\mathcal A}
\def\EE{\mathsf E}

\def\supp{\mathrm{supp}}
\def\Zh{\mathbb{Z}_h}
\def\Exp{\mathrm{Exp}}

\def\measure{\lambda}
\def\diffusion{\sigma^2}
\bibliographystyle{amsplain}
\begin{document}
\title{Non-random overshoots of L\'evy processes}

\author{Matija Vidmar}
\address{Department of Statistics, University of Warwick, UK}
\email{m.vidmar@warwick.ac.uk}

\thanks{The support of the Slovene Human Resources Development and Scholarship Fund under contract number 11010-543/2011 is recognized. The author thanks Saul Jacka for his numerous comments and suggestions which have helped improve the presentation of this article. An extended abstract of this work is forthcoming in the 2013 proceedings of the Actuarial and Financial Mathematics Conference.}

\begin{abstract}
The class of L\'evy processes for which overshoots are almost surely constant quantities is precisely characterized.
\end{abstract}

\keywords{L\'evy processes, overshoots, first entrance/passage times, fluctuation theory}

\subjclass[2010]{Primary: 60G51; Secondary: 60J75} 

\maketitle

\section{Introduction}\label{section:Introduction}
Fluctuation theory represents one of the most important areas within the study of L\'evy processes, with applications in finance, insurance, dam theory etc. \cite{kyprianou} A key result, then, is the Wiener-Hopf factorization, particularly explicit in the spectrally negative case, when there are no positive jumps, a.s. \cite[Section 9.46]{sato} \cite[Chapter VII]{bertoin}.

What makes the analysis so much easier in the latter instance, is the fact that the overshoots $(R_x)_{x\geq 0}$ \cite[p. 369]{sato} over a given level are known \emph{a priori} to be constant and equal to zero. As we shall see, this is also the only class of L\'evy processes for which this is true (see Lemma~\ref{lemma:supremumcontinuity}). But it is not so much the exact values of the overshoots that matter, as does the fact that these values are non-random (and known). It is therefore natural to ask if there are any other L\'evy processes having constant overshoots (a.s.) and, moreover, what \emph{precisely} is the class having this property. 

Of course, in the existing literature one finds expressions regarding the distribution of the overshoots. For example, \cite[p. 369, Theorem 49.1]{sato} gives the double Laplace transform $u\int_{(0,\infty)} e^{-ux}\EE[e^{-qR_x}]dx$ ($\{u,q\}\subset (0,\infty)$) in terms of the Wiener-Hopf factors. Similarly, in \cite{doneykyprianou} we find an expression for the law of the overshoot in terms of the L\'evy measure, but only after it has been integrated against the bivariate renewal functions. Unfortunately, neither of these seem immediately useful in answering the question posed above. 

Further to this, the asymptotic study of quantities at first passage above a given level has been undertaken in \cite{doneykyprianou,kyprianoupardorivero} and behaviour just prior to first passage has also been investigated, see, e.g. \cite[p. 378, Remark 49.9]{sato} and \cite[Chapter 7]{kyprianou}. On the other hand it appears that the (natural) question, outlined above, has not yet received due attention. 

The answer to it, presented in this paper, is as follows: for the overshoots of a L\'evy process to be almost surely constant (conditionally on the process going above the level in question), it is both necessary and sufficient that \emph{either} the process has no positive jumps (a.s.) \emph{or} for some $h>0$, it is compound Poisson, living on the lattice $\Zh:=h\mathbb{Z}=\{hk:k\in \mathbb{Z}\}$, and can only jump up by $h$. 

A more exhaustive statement of this result, which derives the same conclusion from substantially weakened hypotheses, is contained in Theorem~\ref{theorem:non-random-overshoots} of Section~\ref{section:Notation-and-statement-of-result}, which also introduces the required notation. Section~\ref{section:Proof-of-theorem} supplies the proof. Finally, Appendix~\ref{appendix:conditioning_lemmas} contains a result concerning conditional expectation, Proposition~\ref{proposition:lemma_on_conditioning}, which is used in the proof, but is also interesting in its own right. 

\section{Notation and statement of result}\label{section:Notation-and-statement-of-result}
Throughout we work on a filtered probability space $(\Omega,\mathcal{F},\mathbb{F}=(\mathcal{F}_t)_{t\geq 0},\PP)$, which satisfies the standard assumptions (i.e. the $\sigma$-field $\mathcal{F}$ is $\PP$-complete, the filtration $\mathbb{F}$ is right-continuous and $\mathcal{F}_0$ contains all $\PP$-null sets). We let $X=(X_t)_{t\geq 0}$ be a L\'evy process on this space with characteristic triplet $(\sigma^2,\lambda,\mu)_{\tilde{c}}$ relative to some cut-off function $\tilde{c}$ \cite[p. 2]{kyprianou} \cite[pp. 37-39]{sato}. This means that $X$ is an $\mathbb{F}$-adapted process, with stationary, independent increments relative to $\mathbb{F}$, $\PP(X_0=0)=1$ and $X$ is c\`adl\`ag (i.e. right-continuous and admitting left limits) $\PP$-a.s. Then $\diffusion$ is the diffusion coefficient, $\measure$ is the L\'evy measure and $\mu$ is the drift (relative to $\tilde{c}$).  Note that, by definition, $X$ is compound Poisson, if $\diffusion=0$, $\measure(\mathbb{R})\in (0,\infty)$ and (with $\tilde{c}=0$) $\mu=0$.  The supremum process of $X$ is denoted  $\overline{X}_t:=\sup\{X_s:s\in [0,t]\}$ ($t\geq 0$).

Next, for $x\in\mathbb{R}$ introduce $T_x:=\inf \{t\geq 0:X_t\geq x\}$ (respectively $\hat{T}_x:=\inf \{t\geq 0:X_t>x\}$), the first entrance time of $X$ to $[x,\infty)$ (respectively $(x,\infty)$). We will  informally refer to $T_x$ and $\hat{T}_x$ as the \emph{times of first passage} above the level $x$. 

$\mathcal{B}(S)$ will always denote the Borel $\sigma$-field of a topological space $S$; $\supp(m)$ the support of a measure $m$ thereon \cite[p. 9]{kallenberg}; we shall say $m$ is carried by $A\in \mathcal{B}(S)$, if $m(S\backslash A)=0$; $\delta_x:=(A\mapsto \mathbbm{1}_A(x))$, mapping $\mathcal{B}(S)$ into $[0,1]$, is the Dirac measure at $x\in S$. For a random element $R:(\Omega,\mathcal{F})\to (S,\mathcal{S})$, $\PP_R$ is the image measure $\PP\circ R^{-1}$ \cite[p. 24]{kallenberg}. $\mathcal{S}^\star$ denotes the universal completion of a $\sigma$-field $\mathcal{S}$. If $\mu$ is furthermore a measure on $\mathcal{S}$, then the completion of the $\sigma$-field $\mathcal{S}$ relative to the measure $\mu$, is denoted $\overline{\mathcal{S}}^\mu$, while $\overline{\mu}$ is the unique extension of $\mu$ to $\overline{\mathcal{S}}^\mu$ \cite[p. 13]{kallenberg}.

The next definition introduces the continuous-time analogue (modulo a spatial scaling) of a right-continuous integer-valued random walk (for which see, e.g., \cite{brown}):

\begin{definition}[Upwards skip-free L\'evy chain]\label{definition:USF}
$X$ is said to be an \emph{upwards skip-free L\'evy chain}, if it is a compound Poisson process, and for some $h>0$, $\supp(\lambda)\subset \Zh$ and $\supp(\lambda\vert_{\mathcal{B}((0,\infty))})=\{h\}$. 
\end{definition}
Finally, the following notion, which is a rephrasing of ``being almost surely constant conditionally on a given event'', will prove useful:

\begin{definition}[$\PP$-triviality]\label{definition:Ptrivial}
Let $S\ne\emptyset$ be any measurable space, whose $\sigma$-algebra $\mathcal{S}$ contains the singletons. An $S$-valued random element $R$ is said to be $\PP$-\emph{trivial on} an event $A\in\mathcal{F}$ if there exists $r\in S$ such that $R=r$ $\PP$-a.s. on $A$ (i.e. $\PP(\{R=r\}\cap A)=\PP(A)$; equivalently, the push-forward measure $(B\mapsto \PP(A\cap R^{-1}(B)))$, defined on $\mathcal{S}$, is carried by $\{r\}$, not excluding the case when $\PP(A)=0$). The random element $R$ may only be defined on some $B\supset A$ (in which case $R$ should be measurable with respect to the trace $\sigma$-algebra $\{B\cap G:G\in \mathcal{F}\}$). 
\end{definition}
Thanks to Definitions~\ref{definition:USF} and~\ref{definition:Ptrivial}, we can now state succinctly the main result of this paper:

\begin{theorem}[Non-random position at first passage time]\label{theorem:non-random-overshoots}
The following are equivalent:
\begin{enumerate}[(a)]
\item\label{non-random:a} For some $x>0$, $X(T_x)$ is $\PP$-trivial on $\{T_x<\infty\}$.
\item\label{non-random:b} For all $x\in\mathbb{R}$, $X(T_x)$ is $\PP$-trivial on $\{T_x<\infty\}$.
\item\label{non-random:c} For some $x\geq 0$, $X(\hat{T}_x)$ is $\PP$-trivial on $\{\hat{T}_x<\infty\}$ and $\PP$-a.s. strictly positive thereon.
\item\label{non-random:d} For all $x\in\mathbb{R}$, $X(\hat{T}_x)$ is $\PP$-trivial on $\{\hat{T}_x<\infty\}$.
\item\label{non-random:e} Either $\measure((0,\infty))=0$ or $X$ is an upwards skip-free L\'evy chain. 
\end{enumerate}
If so, then the exceptional sets in \ref{non-random:b} and \ref{non-random:d} can actually be chosen \emph{not to depend on $x$}; i.e.  outside a $\PP$-negligible set, \emph{for each $x\in\mathbb{R}$}, $X(T_x)$ (respectively $X(\hat{T}_x)$) is constant on $\{T_x<\infty\}$ (respectively $\{\hat{T}_x<\infty\}$).
\end{theorem}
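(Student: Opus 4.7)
The plan is to prove the two rings of implications (e)$\Rightarrow$(b)$\Rightarrow$(a)$\Rightarrow$(e) and (e)$\Rightarrow$(d)$\Rightarrow$(c)$\Rightarrow$(e). The edges (b)$\Rightarrow$(a) and (d)$\Rightarrow$(c) (pick any $x>0$, so that $X(\hat{T}_x)\ge x>0$ is automatic) are immediate. The implications out of (e) will follow directly from path structure. If $\lambda((0,\infty))=0$ then Lemma~\ref{lemma:supremumcontinuity} furnishes continuity of $\overline{X}$, whence $X(T_x)=x$ on $\{T_x<\infty\}$ and (with a bit of extra care at the boundary) $X(\hat{T}_x)=x$ on $\{\hat{T}_x<\infty\}$; if instead $X$ is an upwards skip-free L\'evy chain with step $h$, then off a single $\PP$-null set paths live in $\Zh$ and their positive jumps are always $h$, so $X(T_x)$ and $X(\hat{T}_x)$ are explicit ceiling/floor functions of $x/h$ \emph{simultaneously} for every $x$. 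That last observation at once settles the final assertion about the exceptional set.

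The substance lies in (a)$\Rightarrow$(e). Fix $x>0$ and $r\ge x$ with $X(T_x)=r$ $\PP$-a.s.\ on $\{T_x<\infty\}$. If $r=x$ then $\overline{X}$ is continuous at $T_x$ on that event, and Lemma~\ref{lemma:supremumcontinuity} forces $\lambda((0,\infty))=0$. If $r>x$, then on a set of positive $\PP$-measure inside $\{T_x<\infty\}$ first passage must occur by a positive jump carrying $X$ from some pre-passage value $y:=X(T_x-)<x$ to exactly $r$; applying the L\'evy-system (compensation) formula at $T_x$ therefore forces
\[
\lambda\bigl((x-y,\infty)\setminus\{r-y\}\bigr)=0\quad\text{for $\PP_{X(T_x-)}$-a.e.\ $y<x$.}
\]
To exploit this effectively I plan first to propagate (a) from the single level $x$ to a rich family of levels: applying the strong Markov property at $T_z$ for $z\in(0,x)$ and combining it with Proposition~\ref{proposition:lemma_on_conditioning} (which handles conditioning on the possibly $\PP$-null events $\{X(T_z-)=y\}$), the same triviality will be seen to hold at each level $x-w$ for $w$ in the support of $X(T_z)$ below $x$. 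Feeding the enlarged family of levels back into the displayed constraint then pins $\lambda\vert_{(0,\infty)}$ to a single atom at some $h>0$, forces $\supp(\lambda)\subset\Zh$, and eliminates both the Gaussian part (any $\sigma^2>0$ would smear the overshoot) and the drift ($\mu=0$ follows, with $\tilde c=0$, once $\lambda$ is known to be finite), identifying $X$ as an upwards skip-free L\'evy chain. The direction (c)$\Rightarrow$(e) runs in exact parallel, the strict-positivity clause being used precisely to rule out the creeping scenario $r=x$ at $x=0$.

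I expect the propagation step to be the principal obstacle: the hypothesis delivers triviality at \emph{one} level, whereas reconstructing $\lambda$, $\sigma^2$ and $\mu$ needs information at many levels at once. The self-similarity of L\'evy processes under the strong Markov property is the natural vehicle, but carrying out the resulting conditional arguments on events $\{X(T_z-)=y\}$ that are individually $\PP$-null for $y$ ranging over a continuum is precisely the scenario Proposition~\ref{proposition:lemma_on_conditioning} was formulated to address. Once the propagation is in hand, what remains is a careful but essentially mechanical combination of the compensation formula with standard L\'evy-process structure theory.
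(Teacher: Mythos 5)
Your architecture (two rings of implications, Lemma~\ref{lemma:supremumcontinuity} for the creeping case, propagation from one level to many, then reconstruction of the triplet) matches the paper's in outline, but the two steps you yourself flag as the substance are left as gestures, and the tools you name for them are not the right ones. The propagation step is the paper's Proposition~\ref{proposition:from_one_to_all}, and it is not accomplished by conditioning on the null events $\{X(T_z-)=y\}$: Proposition~\ref{proposition:lemma_on_conditioning} is a disintegration over an independent pair $(Y,Z)$ with $\sigma(Y)\subset\mathcal{G}$ and $\sigma(Z)\perp\mathcal{G}$, so the correct choice is $Y=X(T_z)$ (the \emph{post}-passage position, which is $\mathcal{F}_{T_z}$-measurable) and $Z$ the post-$T_z$ increment process; the pre-passage position $X(T_z-)$ is not covered by it. The resulting identity $\QQ^b(A)=\int d\overline{\QQ^c}(x_c)\,\QQ^{b-x_c}(A-x_c)$ yields only the dichotomy: for $c\in(0,b)$ either $c\in\AA$ or $(0,b-c]\cap\AA\neq\emptyset$. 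Getting from that to a conclusion requires showing $\inf(\AA\cap\mathbb{R}^+)=0$, then that $\AA$ is dense, and finally invoking \emph{quasi-left-continuity} along $T_{x_n}\uparrow S$ to upgrade density to $\AA=\mathbb{R}$. Your proposal never mentions quasi-left-continuity, and without it (or a substitute) there is no way to pass from "triviality at the levels $x-w$ for $w$ in some support set" to triviality at enough levels to feed your compensation-formula constraint; note in particular that the propagation can be vacuous level-by-level (if $\QQ^z$ is itself degenerate you learn nothing new about intermediate levels), which is exactly why the paper needs the dichotomy-plus-density argument.

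The second gap is the reconstruction of the triplet once positive jumps are present. "Any $\sigma^2>0$ would smear the overshoot" and "$\mu=0$ follows once $\lambda$ is known to be finite" are the conclusions, not arguments. The paper's proof of this (Case~1 of Proposition~\ref{proposition:Ptriviality}) is a genuine contradiction argument: writing $X=X^1+X^2$ by the L\'evy--It\^o decomposition with $X^2$ the jumps exceeding $a$, one shows via Proposition~\ref{proposition:lemma_on_conditioning_basic} that triviality of $X^1(T)$ on $\{T<t\}\cap\{S_t<a/4\}$ would force $X^1$ to be constant on $[0,t)$ with positive probability, contradicting dense jump times (infinite $\lambda$), infinite variation ($\sigma^2>0$), or absence of constancy intervals (nonzero drift); then independence of $X^1(T)$ and $X^2(T)$ and the support-of-convolution lemma give a non-degenerate overshoot. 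Your compensation-formula route might be made to work for pinning $\lambda\vert_{(0,\infty)}$ to an atom and $\supp(\lambda)\subset\Zh$ in the compound Poisson case, but it does not by itself exclude a Gaussian component or a drift (for which first passage need not occur by a jump at all), so as written the proof of (a)$\Rightarrow$(e) is incomplete at both of its critical junctures.
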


\begin{remark}
In \ref{non-random:c}, if $x>0$, then $X(\hat{T}_x)$ is automatically $\PP$-a.s. strictly positive on $\{\hat{T}_x<\infty\}$.
\end{remark}
Finally, we make the following general notation explicit: $\mathbb{N}:=\{1,2,3,\ldots\}$, $\mathbb{N}_0=\mathbb{N}\cup \{0\}$, $\mathbb{R}^+:=(0,\infty)$, $\mathbb{R}_+:=[0,\infty)$, $\mathbb{R}^-:=(-\infty,0)$ and  $\mathbb{R}_-:=(-\infty,0]$; while for $q\in (0,\infty)$, $\Exp(q)$ denotes the exponential law (mean $1/q$);  the symbol $\perp$ is used to indicate stochastic independence (relative to the probability measure $\PP$, or some conditional measure $\PP(\cdot\vert A)$ (with $A\in \mathcal{F}$ and $\PP(A)>0$) derived therefrom, depending on the context); $B(x,\delta)$ is the open ball, centre $x\in \mathbb{R}$, radius $\delta>0$; and $\lceil x\rceil:=\inf\{k\in\mathbb{Z}: k\geq x\}$ ($x\in \mathbb{R}$) is the ceiling function. We will say positive for strictly positive, exceeding for strictly exceeding, decreasing for strictly decreasing and so on. 

Furthermore, it will at times be convenient to work with the canonical space $\mathbb{D}:=\{\omega\in \mathbb{R}^{[0,\infty)}:\omega\text{ is c\`adl\`ag}\}$ of  c\`adl\`ag paths, mapping $[0,\infty)$ into $\mathbb{R}$. Then $\mathcal{H}$ will denote the $\sigma$-field generated by all the evaluation maps, whereas for $\omega\in\mathbb{D}$, $\overline{\omega}$ will be the supremum process of $\omega$ (i.e. $\overline{\omega}(t):=\sup\{\omega(s):s\in [0,t]\}$, $t\geq 0$), and further for $a\in \mathbb{R}$, $T_a(\omega):=\inf \{t\geq 0:\omega(t)\geq x\}$ will be the first entrance time of $\omega$ into the set $[a,\infty)$. Context shall make it clear when $T_a$ will be seen as the latter mapping,  $T_a:\mathbb{D}\to [0,\infty]$, and when as the first entrance time of $X$ into $[a,\infty)$, as per above.

\section{Proof of theorem}\label{section:Proof-of-theorem}

\begin{remark}
$T_x$ and $\hat{T}_x$ are $\mathbb{F}$-stopping times for each $x\in\mathbb{R}$ (apply the D\'ebut Theorem \cite[p. 101, Theorem 6.7]{kallenberg}) and $\PP(T_x=0\text{ for all }x\in\mathbb{R}_-)=1$. Moreover, $\PP(T_x<\infty\text{ for all }x\in\mathbb{R})=1$, whenever $X$ either drifts to $+\infty$ or oscillates. If not, then $X$ drifts to $-\infty$ \cite[p. 255, Proposition 37.10]{sato} and on the event $\{T_x=\infty\}$ one has $\lim_{t\to T_x}X(t)=-\infty$ for each $x\in\mathbb{R}$, $\PP$-a.s. 
\end{remark}
For the most part we find it more convenient to deal with the collection $(T_x)_{x\in\mathbb{R}}$, rather than $(\hat{T}_x)_{x\in\mathbb{R}}$, even though this makes certain measurability issues more involved. 

\begin{remark}
Note that whenever $0$ is regular for $(0,\infty)$ (i.e. $\PP(\hat{T}_0=0)=1$), then for each $x\in\mathbb{R}$, $T_x=\hat{T}_x$ $\PP$-a.s. (apply the strong Markov property \cite[p. 278, Theorem 40.10]{sato} at the time $T_x$). For conditions equivalent to this, see \cite[p. 142, Theorem 6.5]{kyprianou}. Conversely, if $0$ is irregular for $(0,\infty)$, then by Blumenthal's $0-1$ law \cite[p. 275, Proposition 40.4]{sato}, $\PP$-a.s., $\hat{T}_0>0=T_0$.
\end{remark}
We now give two lemmas. The second concerns continuity of the supremum process $\overline{X}$. Since its formulation requires the relevant subsets of the sample space to be measurable, the first lemma establishes this.

In the next lemma, for a process $Y=(Y_t)_{t\geq 0}$, we agree $Y_{0-}:=Y_0$ and $Y_{t-}=\lim_{s\uparrow t}Y_s$ ($t>0$), whenever these limits exist.

\begin{lemma}
Let $(\Omega',\mathcal{G},\mathbb{G}=(\mathcal{G}_t)_{t\geq 0},\QQ)$ be a (respectively complete, i.e. $\QQ$ is complete and $\mathcal{G}_0$ contains all $\QQ$-null sets) filtered probability space. Suppose $Y$ is a $\mathbb{G}$-adapted and (respectively $\QQ$-a.s.) c\`adl\`ag  process. Then (with $\Omega_0$ being the (respectively $\QQ$-almost sure) event on which $Y$ is c\`adl\`ag), for each $\epsilon>0$ and $t\geq 0$, $A_\epsilon:=\cup_{s\in [0,t]}\{Y_s-Y_{s-}>\epsilon\}\cap \Omega_0\in \mathcal{G}_t$. As a consequence of this, the sets $\{Y\text{ is continuous}\}=\{Y_{t-}=Y_{t}\text{ for all }t\geq 0\}$ and $\{Y\text{ has no positive jumps}\}=\{Y_{t-}\geq Y_t\text{ for all }t\geq 0\}$ belong to $\mathcal{G}$. 
\end{lemma}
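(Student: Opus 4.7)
The plan is to approximate jumps of $Y$ by differences $Y_{r'} - Y_r$ at rational pairs $(r, r')$. Fix $t \geq 0$ and $\epsilon > 0$; let $D := (\mathbb{Q} \cap [0, t]) \cup \{t\}$, and for each $n \in \mathbb{N}$ define
\[
M_n := \sup\{Y_{r'} - Y_r : r, r' \in D,\ r \leq r',\ r' - r < 1/n\}.
\]
As a countable supremum of $\mathcal{G}_t$-measurable random variables, $M_n$ is itself $\mathcal{G}_t$-measurable. My aim is to establish
\[
A_\epsilon = \Omega_0 \cap \bigcup_{m \in \mathbb{N}} \bigcap_{n \in \mathbb{N}} \{M_n \geq \epsilon + 1/m\},
\]
from which $A_\epsilon \in \mathcal{G}_t$ is immediate, because $\Omega_0 \in \mathcal{G}_0 \subseteq \mathcal{G}_t$ (trivially in the non-complete case, and via $\QQ$-nullness of $\Omega_0^c$ in the complete case).

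To prove the latter identity, I first show the auxiliary equality
\[
\{\omega \in \Omega_0 : \exists\, s \in (0, t]\ \text{with}\ Y_s(\omega) - Y_{s-}(\omega) \geq \delta\} = \Omega_0 \cap \bigcap_n \{M_n \geq \delta\}
\]
for every $\delta > 0$; substituting $\delta = \epsilon + 1/m$ and unioning over $m \in \mathbb{N}$ then recovers the previous display, since a jump exceeding $\epsilon$ is the same as a jump $\geq \epsilon + 1/m$ for some $m$. The forward inclusion is routine: given a jump of size $\geq \delta$ at $s \in (0, t]$, pick rationals $r_k \uparrow s$ strictly from below, and either set $r_k' = s$ (if $s \in D$) or choose rationals $r_k' \downarrow s$ strictly from above; c\`adl\`ag then gives $Y_{r_k'} - Y_{r_k} \to Y_s - Y_{s-} \geq \delta$ while $r_k' - r_k \to 0$, so $M_n \geq \delta$ for every $n$.

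The reverse inclusion is the crux. From $M_n \geq \delta$ for every $n$, pick $(r_n, r_n') \in D^2$ with $r_n' - r_n < 1/n$ and $Y_{r_n'} - Y_{r_n} > \delta - 1/n$. By compactness of $[0, t]$, pass to a subsequence along which $r_n, r_n' \to s^* \in [0, t]$, and analyze the limits of $Y_{r_n}$ and $Y_{r_n'}$ by splitting according to the position of each of $r_n, r_n'$ relative to $s^*$ (strictly below, equal to, or strictly above). The c\`adl\`ag property identifies each subsequential limit as either $Y_{s^*-}$ or $Y_{s^*}$; only the regime $r_n < s^* \leq r_n'$ (occurring eventually along some subsequence) produces the nonzero limit $Y_{s^*} - Y_{s^*-}$, while every other regime forces $Y_{r_n'} - Y_{r_n} \to 0$, in contradiction with the lower bound $\delta - 1/n \to \delta > 0$. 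Hence $Y_{s^*} - Y_{s^*-} \geq \delta$, closing the identity.

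For the two consequences, the same reasoning applied to $-Y$ shows that the event $B^N_\epsilon$ of a negative jump of size exceeding $\epsilon$ in $[0, N]$ lies in $\mathcal{G}_N$; writing $A^N_\epsilon$ for the positive-jump analogue, absence of positive jumps of $Y$ on $[0, \infty)$, respectively continuity of $Y$, amounts to absence of all such jumps above threshold $1/k$ on every $[0, N]$, yielding
\[
\{Y \text{ has no positive jumps}\} = \Omega_0 \setminus \bigcup_{N, k \in \mathbb{N}} A^N_{1/k}, \qquad \{Y \text{ continuous}\} = \Omega_0 \setminus \bigcup_{N, k \in \mathbb{N}} (A^N_{1/k} \cup B^N_{1/k}),
\]
each of which lies in $\mathcal{G}$. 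The principal obstacle is the case analysis in the compactness step, because a positive jump is only produced in the mixed subsequential regime $r_n < s^* \leq r_n'$; once this is correctly identified, the remainder is bookkeeping with countable Boolean operations.
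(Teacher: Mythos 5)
Your proposal is correct and follows essentially the same route as the paper: approximate jumps by increments over rational pairs at vanishing mesh, use compactness of $[0,t]$ and the c\`adl\`ag property to identify the accumulation point as a genuine jump time of at least the threshold size, and pass between strict and non-strict jump sizes via the $\epsilon+1/m$ device. Your supremum $M_n$ is just a repackaging of the paper's events $F_n$, and the final Boolean bookkeeping (including applying the result to $-Y$ for continuity) matches the paper's concluding remark.
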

\begin{proof}
Define in addition $B_\epsilon:=\cup_{s\in [0,t]}\{Y_s-Y_{s-}\geq \epsilon\}\cap \Omega_0$ ($\epsilon>0$). Then, on the one hand, by the c\`adl\`ag property (respectively outside a $\QQ$-negligible set): 
\begin{equation}\label{eq:auxiliary:jump:measurability:one}
A_\epsilon\subset \cup_{n\in\mathbb{N}}F_n,\text{ where }F_n:=\cap_{N\in\mathbb{N}}\cup_{\{s,r\}\subset (\mathbb{Q}\cap [0,t])\cup \{t\},s<r,r-s<1/N}\{Y_r-Y_s>\epsilon+1/n\}.
\end{equation}
On the other hand, again by the c\`adl\`ag property, for each $n\in\mathbb{N}$ (respectively outside a $\QQ$-negligible set): 
\begin{equation}\label{eq:auxiliary:jump:measurability:two-dash}
F_n\subset B_{\epsilon+1/n}.
\end{equation}
Indeed, if $\omega\in F_n$ (and, respectively, $Y(\omega)$ is c\`adl\`ag), then for each $N\in \mathbb{N}$ we may choose a pair of real numbers $(s_N,r_N)$, $0\leq s_N<r_N\leq t$, $r_N-s_N<1/N$, with $Y_{r_N}-Y_{s_N}>\epsilon+1/n$. Since $[0,t]$ is compact, there is some accumulation point $s^\star$ for the sequence $(s_N)_{N\geq 1}$, and, by passing to a subsequence, we may assume without loss of generality that $s_N\to s^\star$ as $N\to\infty$. Moreover, by right-continuity, it is necessary that there is some natural $M$, with $s_N<s^\star$ for all $N\geq M$; whereas  by the existence of left-hand limits, it will also be necessary that there is some natural $M$, with $s^\star<r_N$ for all $N\geq M$. Then, by passing to the limit, it follows that $Y_{s^\star}(\omega)-Y_{s^\star-}(\omega)\geq \epsilon+1/n$. From \eqref{eq:auxiliary:jump:measurability:two-dash}, we conclude that (respectively $\QQ$-a.s.):
\begin{equation}\label{eq:auxiliary:jump:measurability:two}
\cup_{n\in\mathbb{N}}F_n\subset \cup_{n\in\mathbb{N}}B_{\epsilon+1/n}=A_\epsilon.
\end{equation}
Combining \eqref{eq:auxiliary:jump:measurability:one} and \eqref{eq:auxiliary:jump:measurability:two} we obtain (respectively by completeness) $A_\epsilon\in \mathcal{G}_t$. 

The final assertion of the lemma follows at once. 
\end{proof}

\begin{lemma}[Continuity of the running supremum]\label{lemma:supremumcontinuity}
The supremum process $\overline{X}$ is continuous ($\PP$-a.s.), if and only if $X$ has no positive jumps ($\PP$-a.s). In particular, if $X(T_x)=x$ $\PP$-a.s. on $\{T_x<\infty\}$ for each $x>0$, then $\overline{X}$ is continuous and hence $X$ has no positive jumps, $\PP$-a.s. 
\end{lemma}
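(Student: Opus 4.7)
The plan is to prove the equivalence by two short arguments — an easy pathwise one in the ``if'' direction and a compound-Poisson decomposition in the converse — and then to deduce the ``in particular'' clause by a countable-rationals trick. Measurability of the events ``$X$ has no positive jumps'' and ``$\overline{X}$ is continuous'' is secured by the preceding lemma, so that $\PP$-a.s.\ statements about them make sense.

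For the ``if'' direction, I would fix a path on the event that $X$ has no positive jumps. Right-continuity of $\overline{X}$ follows from right-continuity of $X$ together with monotonicity of the sup. At each $t>0$, the càdlàg property of $X$ gives $X_{t-}\leq \overline{X}_{t-}$ and the hypothesis gives $X_t\leq X_{t-}$, whence $\overline{X}_t=\max(\overline{X}_{t-},X_t)=\overline{X}_{t-}$, which is the desired left-continuity.

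The converse direction is the main obstacle. I would argue contrapositively: assume $\lambda((0,\infty))>0$ and choose $\Delta>0$ with $p:=\lambda((\Delta,\infty))>0$. Split $X=Y+Z$ into its independent ``large positive jump'' compound Poisson component $Y$ (with Lévy measure $\lambda\vert_{(\Delta,\infty)}$) and independent remainder Lévy process $Z$, and let $T\sim\Exp(p)$ be the first jump time of $Y$. Then $Y_{T-}=0$ and the jump $\Delta X_T=\Delta Y_T$ is independent of $(T,Z)$ with law concentrated on $(\Delta,\infty)$, so that
\begin{equation*}
 \overline{X}_T-\overline{X}_{T-}\;\geq\;X_T-\overline{X}_{T-}\;=\;\Delta Y_T-\bigl(\overline{Z}_{T-}-Z_{T-}\bigr).
\end{equation*}
Since $\overline{Z}_{T-}-Z_{T-}$ is a.s.\ finite and independent of $\Delta Y_T$, the right-hand side is positive with positive probability; hence $\overline{X}$ has a jump with positive probability. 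The delicate point is justifying this independence rigorously (via independence of Poisson random measures on disjoint Borel sets), but this is standard.

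For the final assertion, for each positive rational $r$ pick a $\PP$-null set $N_r$ outside which $X(T_r)=r$ on $\{T_r<\infty\}$, and set $N:=\bigcup_{r\in\mathbb{Q}\cap(0,\infty)}N_r$. If $\omega\notin N$ and $\overline{X}(\omega)$ jumps at some $s>0$, then necessarily $X_s(\omega)=\overline{X}_s(\omega)>\overline{X}_{s-}(\omega)\geq 0$, so any rational $r\in(\overline{X}_{s-}(\omega),\overline{X}_s(\omega))$ satisfies $T_r(\omega)=s$ and $X(T_r)(\omega)=X_s(\omega)>r$, contradicting $\omega\notin N_r$. Therefore $\overline{X}$ is continuous off $N$, and the equivalence just proved delivers that $X$ has no positive jumps $\PP$-a.s.
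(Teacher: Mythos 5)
Your overall route coincides with the paper's: the ``if'' direction is a direct pathwise computation, the converse uses the L\'evy--It\^o split of $X$ into the compound Poisson process of large positive jumps plus an independent remainder and looks at the first large jump time $T$, and the ``in particular'' clause is reduced to the equivalence by a countable family of rational levels. All of that is sound, and your treatment of the final clause (picking a rational $r\in(\overline{X}_{s-},\overline{X}_s)$ so that $T_r=s$ and $X(T_r)>r$) is correct and essentially the paper's argument.

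The one step that does not hold up as written is the justification of positivity in the converse direction. You assert that, because $W:=\overline{Z}_{T-}-Z_{T-}$ is a.s.\ finite and independent of $\Delta Y_T$, the quantity $\Delta Y_T-W$ is positive with positive probability. Finiteness plus independence is not enough: if $\lambda\vert_{(\Delta,\infty)}$ has bounded support (say it is $\delta_{2\Delta}$) and $W$ happened to exceed $2\Delta$ a.s., the difference would be a.s.\ negative, so the conclusion cannot follow from the stated hypotheses alone. What actually saves the argument is that $W$ can be made smaller than $\Delta$ with positive probability: by right-continuity of $Z$ at $0$ there is $t>0$ with $\PP(\sup_{s\in[0,t]}\vert Z_s\vert<\Delta/2)>0$, and since $T\sim\Exp(p)$ is independent of $Z$ with $\PP(T<t)>0$, the event $\{T<t\}\cap\{\sup_{s\in[0,t]}\vert Z_s\vert<\Delta/2\}$ has positive probability; on it $W<\Delta<\Delta Y_T$. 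This is precisely the move the paper makes (its event $\{T<t\}\cap\{S_t<a/2\}$ with $S$ the supremum of $\vert X^1\vert$). Also note that the ``delicate point'' is not the independence of the two pieces, which is indeed standard from the Poisson random measure construction, but this quantitative control of the remainder near time $0$. With that one repair your proof is complete.
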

\begin{proof}
We first show the validity of the equivalence. Sufficiency of the ``no positive jumps'' condition is immediate. We prove necessity by contradiction: suppose then, that $X$ had positive jumps with a positive probability and its supremum process was $\PP$-a.s. continuous. Then, for some $a>0$, $X$ would have a jump exceeding $a$ with a positive probability and necessarily we would have $\lambda((a,\infty))>0$. Moreover, by the L\'evy-It\^o decomposition, one may write, $\PP$-a.s., $X=X^1+X^2$ as an independent sum, where $X^2$ is a compound Poisson process of the positive jumps of $X$ exceeding (i.e. of height $>$) $a$ and $X^1=X-X^2$ is whatever remains (see e.g. \cite[p. 108, Theorem 2.4.16]{applebaum} and the results leading thereto, in particular \cite[p. 99, Theorem 2.4.6]{applebaum}). 

Next, let $S$ be the supremum process of $\vert X^1\vert$ and $T$ be the first jump time of $X^2$. By right-continuity of the sample paths, for some $t>0$, $\PP(\{S_t<a/2\})>0$. Further, by independence, and the fact that $T\sim \mathrm{Exp}(\lambda((a,\infty)))$ \cite[p. 101, Theorem 2.3.5(1)]{applebaum}, one has $\PP(\{S_t<a/2\}\cap \{T<t\})>0$. Hence, with a positive probability, $X$ will attain a new supremum (on $[0,t]$) by a jump in $\overline{X}$, which is a contradiction. 

Finally, suppose $X(T_x)=x$ $\PP$-a.s. on $\{T_x<\infty\}$ for each $x>0$. Then the supremum process $\overline{X}$ is a.s. continuous. Indeed, suppose not. Then with a positive probability $\overline{X}$ would have a jump, and therefore, for some pair of rationals $r_1,r_2$ with $0<r_1<r_2$, there would be a jump of $\overline{X}$ over $(r_1,r_2)$ with a positive probability. Then, on this event $X(T_{(r_1+r_2)/2})\geq r_2>(r_1+r_2)/2$, a contradiction. 
\end{proof}
Having established this lemma, the first main step towards the proof of Theorem~\ref{theorem:non-random-overshoots} is the following:

\begin{proposition}[$\PP$-triviality of $X(T_x)$]\label{proposition:Ptriviality}
The random variable $X(T_x)$ (defined on $\{T_x<\infty\}$) is $\PP$-trivial on $\{T_x<\infty\}$ for each $x>0$, if and only if:
\newline \noindent either
\begin{enumerate}[(a)]
\item\label{Ptrivial:a} $X$ has no positive jumps ($\PP$-a.s.) (equivalently: $\lambda((0,\infty))=0$) 
\end{enumerate}
or
\begin{enumerate}[resume*]
\item\label{Ptrivial:b} $X$ is compound Poisson and for some $h>0$, $\supp(\lambda)\subset \Zh$ and $\supp(\lambda\vert_{\mathcal{B}((0,\infty))})=\{h\}$
\end{enumerate}
(conditions \ref{Ptrivial:a} and  \ref{Ptrivial:b} being mutually exclusive). If so, then $X(T_x)=x$ on $\{T_x<\infty\}$ for each $x\geq 0$  ($P$-a.s.) under \ref{Ptrivial:a} and $X(T_x)=h\lceil x/h\rceil$ on $\{T_x<\infty\}$ for each $x\geq 0$  ($\PP$-a.s.) under \ref{Ptrivial:b}.
\end{proposition}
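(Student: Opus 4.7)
\emph{Sufficiency.} In \ref{Ptrivial:a}, Lemma~\ref{lemma:supremumcontinuity} gives $\overline{X}$ $\PP$-a.s.\ continuous, and on $\{T_x<\infty\}$ the sandwich $x\leq X(T_x)\leq \overline{X}(T_x)=\overline{X}(T_x-)\leq x$ pins $X(T_x)=x$. In \ref{Ptrivial:b}, $X$ is piecewise constant and $\Zh$-valued with only $+h$ positive jumps, so on $\{T_x<\infty\}$, $T_x$ is a jump-time and $X(T_x-)$ must be the unique lattice-point in $[x-h,x)$, namely $h(\lceil x/h\rceil-1)$, whence $X(T_x)=h\lceil x/h\rceil$.

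\emph{Necessity: functional equation and dichotomy.} Let $f(x)$ denote the $\PP$-a.s.\ value of $X(T_x)$ on $\{T_x<\infty\}$ (and $f(x):=x$ if $\PP(T_x<\infty)=0$); right-continuity of $X$ gives $f(x)\geq x$. The strong Markov property at $T_x$ (the post-$T_x$ process being an independent copy of $X$ shifted by $f(x)$) delivers
\begin{equation*}
f(f(x)+u)=f(x)+f(u)\qquad(x,u>0),
\end{equation*}
equivalently, each $f(x)$ is a period of $g(x):=f(x)-x\geq 0$. Let $G$ be the additive subgroup of $\mathbb{R}$ generated by $P:=f((0,\infty))$. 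Being measurable and $G$-invariant, $g$ is a.e.\ constant whenever $G$ is dense in $\mathbb{R}$ (by the Lebesgue density theorem); otherwise $G=h\mathbb{Z}$ for some $h>0$.

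\emph{Dense $G$.} A nonzero a.e.-constant value $c>0$ of $g$ is self-contradictory: for $x_1<x_2<x_1+c$ one would have $T_{x_1}=T_{x_2}$ (as $f(x_1)=x_1+c>x_2$), hence $f(x_1)=f(x_2)$, but also $f(x_i)=x_i+c$, forcing $x_1=x_2$. So $g\equiv 0$ a.e.; an upward jump of $\overline{X}$ would then force $f(y)>y$ at some rational $y$ in the jump-interval, contradicting $f=\id$ on a dense set; hence $\overline{X}$ is $\PP$-a.s.\ continuous and Lemma~\ref{lemma:supremumcontinuity} yields \ref{Ptrivial:a}.

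\emph{Discrete $G=h\mathbb{Z}$.} Now $f(x)\in h\mathbb{Z}$, $g$ is $h$-periodic, and $f$ is monotone non-decreasing (since $T_x$ is). To identify the L\'evy triplet: (i)~A Gaussian component would give positive probability of creeping at each $x$, producing $X(T_x)=x\notin h\mathbb{Z}$ with positive probability for $x\in(0,h)\setminus h\mathbb{Z}$, a contradiction; similarly any nonzero drift (in the BV case) is excluded. (ii)~If $\supp(\lambda\vert_{\mathcal{B}((0,\infty))})$ contained two points $a<b$, a L\'evy-It\^o decomposition $X=X^1+X^2$ collecting jumps near $a$ and $b$ into $X^2$ would yield, on the positive-probability event $\{\overline{X^1}(\tau)<x\}$ (with $\tau$ the first jump of $X^2$ and $x$ chosen small), $X(T_x)=X^1(\tau-)+\Delta X^2(\tau)$; independence of $X^1$ and $\Delta X^2(\tau)$ would force $X(T_x)$ into two disjoint intervals, each with positive probability, contradicting $\PP$-triviality. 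So $\supp(\lambda\vert_{\mathcal{B}((0,\infty))})=\{h'\}$ for a single $h'>0$, and matching $X(T_x-)=f(x)-h'\in[x-h',x)$ with $f(x)\in h\mathbb{Z}$ forces $h'=h$ and $f(x)=h\lceil x/h\rceil$. (iii)~Determinism of $X(T_x-)=h(\lceil x/h\rceil-1)$ then forces each negative excursion of $X$ to return exactly to this lattice value before the next $+h$ jump, which requires $\supp(\lambda)\subset h\mathbb{Z}$ and (automatically, since the lattice is bounded away from $0$) $\lambda$ finite, so $X$ is compound Poisson. This last step (iii) is the main obstacle, demanding a careful pathwise analysis of negative excursions and a ruling-out of any infinite-activity behaviour that would render $X(T_x-)$ non-atomic.
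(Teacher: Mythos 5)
Your sufficiency argument and the dichotomy you set up via the functional equation $f(f(x)+u)=f(x)+f(u)$ and the group $G$ generated by $f((0,\infty))$ are sound in outline and make an attractive alternative organisation (the paper instead splits directly on whether $X(T_x)=x$ for all $x>0$, handled by Lemma~\ref{lemma:supremumcontinuity}, versus $f(x)>x$ for some $x$). But the necessity direction has a genuine gap exactly where the proposition is hard, and you flag it yourself: step~(iii) --- that in the discrete case $X$ must be compound Poisson with $\supp(\lambda)\subset\Zh$ --- is asserted, not proved. Nothing in your argument rules out, for instance, $\sigma^2=0$, $\supp(\lambda\vert_{\mathcal{B}((0,\infty))})=\{h\}$ but $\lambda((-\infty,0))=\infty$, or a bounded-variation process with negative drift: in such cases the pre-jump position $X(T_x-)$ has a diffuse law and $X(T_x)$ cannot be $\PP$-trivial, but that is precisely what must be demonstrated, not assumed. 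Relatedly, step~(i) is incorrect as stated for negative drift: a bounded-variation process with drift $d<0$ does \emph{not} creep upward, so ``similarly any nonzero drift \dots is excluded'' does not follow from creeping and requires a separate argument.

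The missing idea is the paper's Case~1 argument: write $X=X^1+X^2$ with $X^2$ the compound Poisson part of jumps exceeding some $a>0$, let $T$ be the first jump time of $X^2$, and condition on $\sigma(T)$ via Proposition~\ref{proposition:lemma_on_conditioning_basic} to show that if $X^1(T)$ were a.s.\ constant on $C=\{T<t\}\cap\{S_t<a/4\}$, then $X^1$ would be constant on all of $[0,t)$ on an event of positive probability --- impossible when $\lambda$ is infinite (dense jump times), when $\sigma^2>0$ (locally infinite variation), or when the drift is nonzero (no intervals of constancy). Since $X^2(T)$ is independent of $X^1(T)$ conditionally on $C$, the support of $X(T)=X^1(T)+X^2(T)$ then contains at least two points, contradicting the $\PP$-triviality of $X(T_{a/2})$. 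This one argument disposes of every non--compound-Poisson case (Gaussian part, infinite activity, nonzero drift of either sign) simultaneously, and it is what your proposal lacks. The remaining claim $\supp(\lambda)\subset\Zh$ for the negative jumps also needs the explicit contradiction (jump off the lattice with positive probability, then climb past $h$ by steps of $h$ at a level $\neq h$, versus jumping directly to $h$) rather than an appeal to ``determinism of $X(T_x-)$'', which is itself part of what is being proved.
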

\begin{remark}
Note that, under \ref{Ptrivial:b}, $\PP(\{X_t\in \Zh\text{ for all }t\geq 0\})=1$. This follows by \cite[p. 149, Corollary 24.6]{sato} and sample path right-continuity. 
\end{remark}
The main idea behind the proof of Proposition~\ref{proposition:Ptriviality} is to appeal first to Lemma~\ref{lemma:supremumcontinuity} for the case when, for all $x>0$, $X(T_x)=x$ $\PP$-a.s. on $\{T_x<\infty\}$. This gives \ref{Ptrivial:a}.  Then we treat separately the compound Poisson case; in all other instances the L\'evy-It\^o decomposition and the well-established path properties of L\'evy processes yield the claim. Intuitively, for a L\'evy process to cross over every level in a non-random fashion, either it does so necessarily continuously when there are no positive jumps (cf. also \cite[p. 274, Proposition 6.1.2]{kolokoltsov}), or, if there are, then it must be forced to live on the lattice $\Zh$ for some $h>0$ and only jump up by $h$. Formally:
\begin{proof}
Assume, without loss of generality, that $X$ is c\`adl\`ag with certainty (rather than just $\PP$-a.s.). Clearly conditions \ref{Ptrivial:a} and \ref{Ptrivial:b} are mutually exclusive, sufficiency of the conditions and the final remark of Proposition~\ref{proposition:Ptriviality} obtain by sample path right-continuity. With regard to the equivalence noted parenthetically in \ref{Ptrivial:a} see \cite[p. 346, Remark 46.1]{sato}. 

Necessity of the conditions from Proposition~\ref{proposition:Ptriviality} is shown as follows. Let $X(T_x)$ be $\PP$-trivial on $\{T_x<\infty\}$ for each $x>0$.

 Suppose first that for each $x>0$, $X(T_x)=x$ ($\PP$-a.s.) on $\{T_x<\infty\}$. Then by Lemma~\ref{lemma:supremumcontinuity}, \ref{Ptrivial:a} must hold. 

There remains the case when, for some $x>0$, $\PP(T_x<\infty)>0$ and there is a non-random $f(x)$ with $f(x)=X(T_x)>x$ $\PP$-a.s. on $\{T_x<\infty\}$. In particular, $X$ must have positive jumps, and for some $a>0$, $\beta:=\lambda((a,\infty))>0$. Use again the L\'evy-It\^o decomposition as in Lemma~\ref{lemma:supremumcontinuity} with $S$ denoting the supremum process of $\vert X^1\vert$ and $T$ the first jump time of $X^2$ (note that $T\sim \Exp(\beta)$). We will consider the following two cases separately:

\begin{enumerate}[leftmargin=2cm,label=(Case~\arabic{*}),ref=Case~\arabic{*}]
\item\label{non-random-overshoots:case1}  $X$ is \emph{not} compound Poisson, i.e. either $\lambda$ has infinite mass or $\sigma^2>0$, or if this fails (with $\tilde{c}=0$ as the cut-off function) $\mu\ne 0$. 
\item\label{non-random-overshoots:case2} $X$ is compound Poisson, i.e. the diffusion coefficient vanishes, $\sigma^2=0$, $\lambda$ is finite and  (with $\tilde{c}=0$ as the cut-off function) the drift $\mu=0$. 
\end{enumerate}
Consider first \ref{non-random-overshoots:case1}. By right-continuity of the sample paths, there is a $t>0$ with $\PP(\{S_t<a/4\})>0$. 

We next argue that, on the event: $$C:=\{T<t\}\cap \{S_t<a/4\},$$ which has positive probability, $X^1(T)$ is not $\PP$-trivial. We prove this by contradiction. More precisely, we shall find that assuming the converse, will contradict the following observation regarding the sample paths of $X^1$: the set of jump times of $X^1$ is dense, a.s., by \cite[p. 136, Theorem 21.3]{sato} when $\lambda$ has infinite mass; the sample paths of $X^1$ have locally infinite variation, a.s., by \cite[p. 140, Theorem 21.9(ii)]{sato} when $\sigma^2>0$; finally, $X^1$ has no non-degenerate intervals of constancy, a.s., when $\sigma^2=0$, $\lambda(\mathbb{R})<\infty$ but the drift is non-zero.

Indeed, suppose that $X^1(T)$ were to be $\PP$-trivial on the event $C$, so that there would be a (necessarily unique) $b\in (-a/4,a/4)$ such that $X^1(T)=b$ $\PP$-a.s. on $C$, i.e. $\PP(\{X^1(T)=b\}\cap C)=\PP(C)$. We next condition on $\mathcal{G}:=\sigma(T)$ by applying Proposition~\ref{proposition:lemma_on_conditioning_basic}. Specifically, we take, discarding, without loss of generality, the $\PP$-negligible set $\{T=\infty\}$, $Y:=T$ (so that $Y:(\Omega,\mathcal{F})\to (\mathbb{R}_+,\mathcal{B}(\mathbb{R}_+))$ and, of course, $\sigma(Y)\subset \mathcal{G}$) and $Z:=X^1$ (so that  $\sigma(Z)\perp \mathcal{G}$ and $Z:(\Omega,\mathcal{F})\to (\mathbb{D},\mathcal{H})$ --- recall from the end of Section~\ref{section:Notation-and-statement-of-result} notation pertaining to the space $(\mathbb{D},\mathcal{H})$). Finally, $f:\mathbb{R}_+\times\mathbb{D}\to\mathbb{R}$ is given by: $$f(s,\omega):=\mathbbm{1}_{\{b\}}(\omega(s))\mathbbm{1}_{[0,t)}(s)\mathbbm{1}_{[0,a/4)}(\max\{\overline{\omega}(t),-\overline{-\omega}(t)\}),\quad (s,\omega)\in \mathbb{R}_+\times\mathbb{D}.$$ Note that the latter is bounded and $\mathcal{B}(\mathbb{R})\otimes\mathcal{H}$/$\mathcal{B}(\mathbb{R})$-measurable by \cite[p. 5, 1.14 Remark]{karatzas} and since, owing to sample path right-continuity, $(\omega\mapsto \overline{\omega}(t))$ is $\mathcal{H}$/$\mathcal{B}(\mathbb{R}_+)$-measurable. Proposition~\ref{proposition:lemma_on_conditioning_basic} thus yields: $$\EE[f\circ (Y,Z)\vert \mathcal{G}]=g\circ Y,$$ where $g:=(y\mapsto \EE[f\circ (y,Z)])$, $g:\mathbb{R}_+\to \mathbb{R}$, is Borel measurable. Now, on the one hand: $$\EE[g\circ Y]=\int g d\PP_T=\int_0^\infty ds  \beta e^{-\beta s}\EE[f\circ (s,Z)]=\int_0^tds\beta e^{-\beta s}\PP(\{X^1(s)=b\}\cap \{S_t<a/4\}).$$ On the other hand: 
\begin{eqnarray*}
\EE[\EE[f\circ (Y,Z)\vert \mathcal{G}]]&=&\EE[f\circ (Y,Z)]=\PP(\{X^1(T)=b\}\cap C)=\PP(C)\\
&=&\PP(T<t)\PP(S_t<a/4)=\int_0^tds\beta e^{-\beta s}\PP(S_t<a/4).
\end{eqnarray*}
In summary, it follows that: $$\int_0^t ds\beta e^{-\beta s}\PP(\{X^1(s)=b\}\cap \{S_t<a/4\})=\int_0^t ds\beta e^{-\beta s}\PP(\{S_t<a/4\}).$$ Hence, Lebesgue-a.e. in $s\in (0,t)$, a.s. on $\{S_t<a/4\}$, $X^1(s)=b$. Now we can find for each rational $r\in (0,t)$ and $n\in \mathbb{N}$ an $x_n^r \in B(r,1/n)$ for which a.s. on $\{S_t<a/4\}$, $X^1(x_n^r)=b$. So a.s. on  $\{S_t<a/4\}$, on a dense countable subset of $(0,t)$, $X^1=b$. Thus by sample path right-continuity a.s. on  $\{S_t<a/4\}$, $X^1=b$ everywhere on $[0,t)$. Hence, on an event of positive probability, there are \emph{no} jump times on the whole of the interval $[0,t)$, the path has \emph{zero} variation over $[0,t)$ and is, moreover, \emph{constant} thereon, a contradiction.

We have thus established that $X^1(T)$ is \emph{not} $\PP$-trivial on the event $C$. 

Observe now that $X^2(T)$ is independent of $T$, both being jointly independent of $X^1$. Then $X^2(T)\perp \sigma(\mathbbm{1}_C,X^1(T))$, so that (for Borel subsets $A$ and $B$ of $\mathbb{R}$): 
\begin{equation*}
\PP(C\cap \{X^1(T)\in A\}\cap \{X^2(T)\in B\})\PP(C)=\PP(C\cap \{X^1(T)\in A\})\PP(C\cap \{X^2(T)\in B\}).
\end{equation*}
We conclude that the first jump of $X^2$, $X^2(T)$, is independent of $X^1(T)$, conditionally on $C$. The support of their sum $X(T)=X^1(T)+X^2(T)$ on $C$, is therefore the closure of the sum of their respective supports \cite[p. 148, Lemma 24.1]{sato} and as such contains at least two points. It follows that, on the stipulated event of positive probability, which is contained in $\{T_{a/2}<\infty\}$ and on which $T_{a/2}=T$, $X(T_{a/2})=X(T)$ is not $\PP$-trivial, a contradiction.

Consider now \ref{non-random-overshoots:case2}. Suppose furthermore that the support of $\lambda\vert_{\mathcal{B}((0,\infty))}$ were to contain at least two points $b<c$, say. Choose $\delta<b/2$ small enough such that $B(b,\delta)\cap B(c,\delta)=\emptyset$. The measure $\lambda$ must charge both these open balls, and hence the first jump can be in either one, each with a positive probability. Thus $X(T_{b/2}$) would not be $\PP$-trivial on the event $\{T_{b/2}<\infty\}$, a contradiction. Plainly, then, the support of $\lambda\vert_{\mathcal{B}((0,\infty))}$ is $\{h\}$ for some $h>0$. 

It only remains to show that $\lambda$ is supported by $\Zh$. To see this, suppose it were not. Then there would be an $x<0$ and a $\delta>0$, with $B(x,\delta)$ having a non-empty intersection with the support of $\lambda$ and an empty intersection with $\Zh$. With a positive probability $X$ would jump into $B(x,\delta)$ and then have a sequence of jumps of size $h$ upwards going above $h$ for the first time at a level distinct from $h$. With a positive probability, $X$ also goes above $h$ by making its first jump to $h$, a contradiction.

The proof is complete.
\end{proof}

The second (and last) main step towards the proof of Theorem~\ref{theorem:non-random-overshoots} consists in taking advantage of the temporal and spatial homogeneity of L\'evy processes. Thus the condition in Proposition~\ref{proposition:Ptriviality} is relaxed to one in which the $\PP$-triviality of the position at first passage is required for one $x>0$, rather than all. To shorten notation let us introduce:

\begin{definition}
For $x\in\mathbb{R}$, let $\QQ^x:\mathcal{B}(\mathbb{R})\to [0,\PP(T_x<\infty)]$, $$\QQ^x(B):=\PP(\{X(T_x)\in B\} \cap \{T_x<\infty\}), \quad B\in \mathcal{B}(\mathbb{R}),$$ be the (possibly subprobability) law of $X(T_x)$ on $\{T_x<\infty\}$ under $\PP$ on the space $(\mathbb{R},\mathcal{B}(\mathbb{R}))$. We also introduce the set: $$\AA:=\{x\in\mathbb{R}:\QQ^x\text{, which may have zero mass,  is carried by a singleton}\}.$$
\end{definition}

\begin{remark}\label{remark:theAAset}
Clearly $(-\infty,0]\subset \AA$ and for each $a\in \AA$, there exists an (unique, if $\PP(T_a<\infty)>0$) $f(a)$ such that: $$\QQ^a=\PP(T_a<\infty)\delta_{f(a)}.$$
\end{remark}
With this at our disposal, we can formulate our claim as:

\begin{proposition}\label{proposition:from_one_to_all}
Suppose $\AA\cap \mathbb{R}^+\ne \emptyset$. Then $\AA=\mathbb{R}$.
\end{proposition}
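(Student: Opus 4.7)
My plan is to propagate $\PP$-triviality of $X(T_{\cdot})$ from $x_0$ to all of $\mathbb{R}$ by iterated use of the strong Markov property of $X$, combined in the end with the L\'evy--It\^o-decomposition argument from the proof of Proposition~\ref{proposition:Ptriviality}. Fix $x_0 \in \AA \cap \mathbb{R}^+$ and write $f_0 := f(x_0)$, so $f_0 \geq x_0 > 0$. The degenerate case $\PP(T_{x_0} < \infty) = 0$ forces $\overline{X}_\infty < x_0$ $\PP$-a.s., which is only compatible with $X$ having no positive jumps (otherwise $\overline{X}_\infty$ would have unbounded support), no Brownian part, and non-positive drift; in that case $X$ is $\PP$-a.s.\ non-increasing and $\AA = \mathbb{R}$ at once. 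So henceforth $\PP(T_{x_0} < \infty) > 0$.

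The first step uses the strong Markov property at $T_{x_0}$: on $\{T_{x_0} < \infty\}$, $X' := X_{T_{x_0} + \cdot} - f_0$ is an independent copy of $X$. For $y > f_0$ one has $T_y = T_{x_0} + T'_{y - f_0}$ and $X(T_y) = f_0 + X'(T'_{y - f_0})$ on $\{T_y < \infty\}$, whence $\QQ^y(\cdot) = \PP(T_{x_0} < \infty) \cdot \QQ^{y - f_0}(\cdot - f_0)$; in particular $y \in \AA \Leftrightarrow y - f_0 \in \AA$. For $y \in [x_0, f_0]$, $T_y = T_{x_0}$ forces $X(T_y) = f_0$ $\PP$-a.s., so $[x_0, f_0] \subset \AA$ with $f \equiv f_0$ there. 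Iterating yields $\AA \cap [x_0, \infty) \supset \bigcup_{k \geq 0}[x_0 + k f_0, (k+1) f_0]$, with $\AA \cap (f_0, \infty)$ being $f_0$-periodic in the appropriate sense; the gaps that remain are $(0, x_0)$ and its $f_0$-translates.

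The second step attacks $(0, x_0)$. For $\bar x \in (0, x_0)$, apply the strong Markov property at $T_{\bar x}$ and partition $\{T_{\bar x} < \infty\}$ according to $\{X(T_{\bar x}) \geq x_0\}$ (on which $T_{x_0} = T_{\bar x}$ forces $X(T_{\bar x}) = f_0$ $\PP$-a.s.) versus $\{X(T_{\bar x}) < x_0\}$. On the latter, a conditioning argument via Proposition~\ref{proposition:lemma_on_conditioning_basic}, applied to $X(T_{\bar x})$ and the post-passage copy of $X$, shows that for $\QQ^{\bar x}$-a.e.\ $u$ in $[\bar x, x_0)$, one has $x_0 - u \in \AA$ with $f(x_0 - u) = f_0 - u$. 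Hence either all of $\QQ^{\bar x}$ sits on $[x_0, \infty)$ (so that $\bar x \in \AA$), or a $\QQ^{\bar x}$-positive-mass subset of $[\bar x, x_0)$ maps into $\AA \cap (0, x_0 - \bar x]$. Iterating this procedure at progressively deeper levels of $\AA$ propagates $\AA$-membership throughout $(0, x_0)$, and in particular exhibits elements of $\AA \cap \mathbb{R}^+$ of arbitrarily small positive size.

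Finally, with such arbitrarily small elements of $\AA \cap \mathbb{R}^+$ in hand, the L\'evy--It\^o-decomposition argument from Case~1 of the proof of Proposition~\ref{proposition:Ptriviality} applies with jump threshold $a > 0$ as small as one likes (choosing $a$ so that $a/2 \in \AA$): it forces $\lambda((0, \infty)) = 0$ whenever $X$ is not compound Poisson. If $X$ is compound Poisson, the analog of Case~2 (combined with the rogue-jump argument from the end of the same proof, which again exploits small $\AA$-elements) forces $\supp \lambda|_{\mathcal{B}((0, \infty))} = \{h\}$ and $\supp \lambda \subset \Zh$ for some $h > 0$, so $X$ is an upwards skip-free L\'evy chain. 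In either outcome the sufficiency direction of Proposition~\ref{proposition:Ptriviality} yields $\AA = \mathbb{R}$. The main obstacle I foresee is the second step --- proving that the Markov iteration truly enriches $\AA$ all the way down to arbitrarily small positive levels, which requires careful application of Proposition~\ref{proposition:lemma_on_conditioning_basic} and attentive management of the supports of the restricted overshoot laws, especially to preclude the iteration from stalling at some positive lower limit.
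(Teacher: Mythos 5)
Your step-2 dichotomy --- for $\bar x\in(0,x_0)$, either $\bar x\in\AA$ or $\AA\cap(0,x_0-\bar x]\ne\emptyset$ --- is precisely the paper's key claim \ref{(natural)}, which the paper extracts from the identity $\QQ^b(A)=\int d\overline{\QQ^c}(x_c)\,\QQ^{b-x_c}(A-x_c)$ of Lemma~\ref{lemma:an_integral_connection}; be aware that justifying it needs the completed conditioning lemma (Proposition~\ref{proposition:lemma_on_conditioning}), not just Proposition~\ref{proposition:lemma_on_conditioning_basic}, because $(x,\omega)\mapsto\omega(T_{b-x}(\omega))$ is only universally measurable. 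The genuine gap is the one you yourself flag: iterating the dichotomy does \emph{not} ``propagate $\AA$-membership throughout $(0,x_0)$'', and as written nothing precludes the iteration from stalling at a positive level. The fix is short and needs no iteration at all: put $m:=\inf(\AA\cap\mathbb{R}^+)$ and suppose $m>0$; pick $b\in\AA\cap[m,3m/2)$ and apply the dichotomy with $c:=3m/4\in(0,b)$. The first alternative places $3m/4<m$ in $\AA$, the second places an element of $(0,b-3m/4]\subset(0,3m/4]$ in $\AA$; either way $m$ is not a lower bound, a contradiction. Hence $m=0$, which is all your final step requires (the stronger claim $(0,x_0)\subset\AA$ is neither established nor needed). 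Also, the degenerate case is immediate: if $\PP(T_x<\infty)=0$ for all $x>0$, every $\QQ^x$ is the zero measure, hence carried by a singleton by definition, so $\AA=\mathbb{R}$ with no structural analysis of $X$.

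Your endgame genuinely departs from the paper's. The paper shows $\AA$ is dense (via your step 1 applied to the small elements of $\AA$) and then captures every $x\in\mathbb{R}^+$ by quasi-left-continuity: for $\AA\ni x_n\uparrow x$ one has $T_{x_n}\uparrow S:=\inf\{t\geq 0:\overline{X}_t\geq x\}$, whence $S=T_x$ and $X(T_x)=\lim_n f(x_n)$ a.s.\ on $\{T_x<\infty\}$. Your alternative --- rerun the L\'evy--It\^o case analysis of Proposition~\ref{proposition:Ptriviality} at a single small level and then invoke its sufficiency direction --- also works, but each sub-argument must be re-aimed at a level you actually know to lie in $\AA$, since you no longer have triviality at \emph{all} positive levels: for Case~1 choose $\alpha\in\AA\cap(0,a_1/2)$ where $\lambda((a_1,\infty))>0$ and run the argument with $a:=2\alpha$; in the compound Poisson case two support points $b<c$ of $\lambda\vert_{\mathcal{B}((0,\infty))}$ make $X(T_\alpha)$ non-trivial for $\alpha\in\AA\cap(0,b/2)$; and the rogue-jump argument should contradict triviality of $X(T_\alpha)$ for $\alpha\in\AA\cap(0,h)$, the first passage over $\alpha$ landing either at $h$ or off the lattice $\Zh$. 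With these adjustments and the infimum argument above, your route is a correct substitute for the quasi-left-continuity step, at the cost of repeating the path-property analysis.
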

The proof of Proposition~\ref{proposition:from_one_to_all} will proceed in several steps, but the essence of it consists in establishing the intuitively appealing identity $\QQ^b(A)=\int d\overline{\QQ^c}(x_c)\QQ^{b-x_c}(A-x_c)$ for Borel sets $A$ and $c\in (0,b)$, see Lemma~\ref{lemma:an_integral_connection} below. This identity puts a constraint on the family of measures $(\QQ^a)_{a\in \mathbb{R}}$. In particular, it allows to demonstrate that $\AA$ is dense in the reals. Then we can appeal to quasi-left-continuity to conclude the proof. The main argument is thus fairly short, and a substantial amount of time is spent on measurability issues.

\begin{lemma}\label{lemma:an_integral_connection}
Let $b\in \mathbb{R}^+$, $c\in (0,b)$ and $A\in \mathcal{B}(\mathbb{R})$. Then: 
\begin{equation}\label{lemma:overshoot_measures_constraint}
\QQ^b(A)=\int d\overline{\QQ^c}(x_c)\QQ^{b-x_c}(A-x_c).
\end{equation}
\end{lemma}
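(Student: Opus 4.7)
The plan is to invoke the strong Markov property at the $\mathbb{F}$-stopping time $T_c$ together with spatial homogeneity of $X$. Since $c<b$, every path that reaches $[b,\infty)$ has already reached $[c,\infty)$ at an earlier or equal time, so $T_c\leq T_b$ everywhere and $\{T_b<\infty\}\subset \{T_c<\infty\}$. On $\{T_c<\infty\}$ set $X'_t:=X_{T_c+t}-X_{T_c}$ for $t\geq 0$; by the strong Markov property $X'$ is independent of $\mathcal{F}_{T_c}$ and has the same law as $X$ under $\PP$. With $T'_y:=\inf\{t\geq 0:X'_t\geq y\}$ ($y\in\mathbb{R}$), one verifies directly from the definitions that on $\{T_c<\infty\}$ one has $T_b=T_c+T'_{b-X(T_c)}$, and on $\{T_b<\infty\}$, $X(T_b)=X(T_c)+X'(T'_{b-X(T_c)})$.

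With this decomposition I would apply Proposition~\ref{proposition:lemma_on_conditioning_basic}, taking $\mathcal{G}:=\mathcal{F}_{T_c}$, $Y:=X(T_c)$ (a $\mathcal{G}$-measurable real random variable, defined on $\{T_c<\infty\}$), $Z:=X'$ as an $(\mathbb{D},\mathcal{H})$-valued random element independent of $\mathcal{G}$, and the bounded kernel
\[
f(y,\omega):=\mathbbm{1}_{\{T_{b-y}(\omega)<\infty\}}\,\mathbbm{1}_{A-y}\bigl(\omega(T_{b-y}(\omega))\bigr),\quad (y,\omega)\in\mathbb{R}\times\mathbb{D}.
\]
Then, using the decomposition above on $\{T_b<\infty\}\subset\{T_c<\infty\}$,
\begin{align*}
\QQ^b(A)&=\EE\bigl[\mathbbm{1}_{\{T_b<\infty\}}\mathbbm{1}_A(X(T_b))\bigr]=\EE\bigl[\mathbbm{1}_{\{T_c<\infty\}}\,f(X(T_c),X')\bigr]\\
&=\EE\bigl[\mathbbm{1}_{\{T_c<\infty\}}\,g(X(T_c))\bigr],
\end{align*}
where $g(y):=\EE[f(y,X')]=\QQ^{b-y}(A-y)$. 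Since the (sub)law of $X(T_c)$ on $\{T_c<\infty\}$ under $\PP$ is precisely $\QQ^c$, a change of variables delivers the claimed identity $\QQ^b(A)=\int\QQ^{b-x_c}(A-x_c)\,d\overline{\QQ^c}(x_c)$. Note that the formula is consistent even for $x_c\geq b$: in that case $T_{b-x_c}=0$ a.s.\ and $\QQ^{b-x_c}(A-x_c)=\mathbbm{1}_A(x_c)$, which accounts for paths that jumped strictly past $b$ at $T_c$.

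The main obstacle, and the reason for integrating against the completion $\overline{\QQ^c}$ rather than $\QQ^c$, lies in the measurability of $f$ and hence of $g$. Specifically, I need to show that on $(\mathbb{D},\mathcal{H})$ the map $(y,\omega)\mapsto \mathbbm{1}_{\{T_{b-y}(\omega)<\infty\}}\,\omega(T_{b-y}(\omega))$ is $\mathcal{B}(\mathbb{R})\otimes \mathcal{H}^\star$-measurable. The level maps $y\mapsto T_{b-y}(\omega)$ are monotone (non-decreasing in $y$) and the underlying first-passage times are D\'ebut times with respect to the usual augmentation of the canonical filtration, so a monotone-class/right-continuity argument, combined with sample-path c\`adl\`ag regularity for the evaluation $\omega\mapsto \omega(T_{b-y}(\omega))$, yields the requisite joint measurability. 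Universal measurability of $g$ then follows by Fubini for completed product measures, which is exactly the sense required to make the integral against $\overline{\QQ^c}$ well-defined and to legitimise the conditional-expectation step above. All remaining steps are routine bookkeeping.
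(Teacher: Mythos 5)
Your proposal is correct and follows essentially the same route as the paper: the strong Markov property at $T_c$, the decomposition $T_b=T_c+T'_{b-X(T_c)}$ with $X(T_b)=X(T_c)+X'(T'_{b-X(T_c)})$, a conditioning lemma to factor out $X(T_c)$, and integration against $\overline{\QQ^c}$ precisely because the integrand is only measurable with respect to a completion. The only adjustment is that Proposition~\ref{proposition:lemma_on_conditioning_basic} does not apply as stated (it requires genuine $\mathcal{B}(\mathbb{R})\otimes\mathcal{H}$-measurability of $f$, whereas $(y,\omega)\mapsto T_{b-y}(\omega)=T_b(\omega+y)$ is only $(\mathcal{B}(\mathbb{R})\otimes\mathcal{H})^\star$-measurable via the D\'ebut theorem); one must instead invoke the completed version, Proposition~\ref{proposition:lemma_on_conditioning}, which is exactly what your closing paragraph re-derives informally.
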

\begin{proof}
If $\PP(T_c<\infty)=0$, then $\PP(T_b<\infty)=0$, $\QQ^b=\QQ^c=0$, and the claim is trivial. So assume, without loss of generality, that $\PP(T_c<\infty)>0$ and that $X$ is c\`adl\`ag with certainty (rather than just $\PP$-a.s.).

Let (on $\{T_c<\infty\}$): $\overset{\triangle}{X}:=(X(T_c+t)-X(T_c))_{t\geq 0}$ and $\overset{\triangle}{T}_y:=\inf\{t\geq 0:\overset{\triangle}{X}_t\geq y\}$ ($y\in\mathbb{R}$), while $\mathcal{F}_{T_c}':=\{B\cap \{T_c<\infty\}:B\in \mathcal{F}_{T_c}\}$ is  $\mathcal{F}_{T_c}$ lowered onto $\{T_c<\infty\}$. By the strong Markov property, $\overset{\triangle}{X}$ is independent of $\mathcal{F}_{T_c}'$ under $\PP(\cdot\vert \{T_c<\infty \})$. Then:
\begin{eqnarray*}
\QQ^b(A)&=&\EE[\mathbbm{1}_A\circ X(T_b)\mathbbm{1 }_{\{T_b<\infty\}}],\text{ by the definition of }\QQ^b,\\
&=&\EE[\mathbbm{1}_{\{\overset{\triangle}{X}(\overset{\triangle}{T}_{b-X(T_c)})+X(T_c)\in A\}}\mathbbm{1}_{\{\overset{\triangle}{T}_{b-X(T_c)}<\infty\}}\mathbbm{1}_{\{T_c<\infty\}}], \text{ since }T_b=T_c+\overset{\triangle}{T}_{b-X(T_c)},\\
&=&\PP(T_c<\infty)\times \EE^{\PP(\cdot\vert \{T_c<\infty\})}\left[\EE^{\PP(\cdot\vert \{T_c<\infty\})}\left [\mathbbm{1}_{\{\overset{\triangle}{X}(\overset{\triangle}{T}_{b-X(T_c)})+X(T_c)\in A\}}\mathbbm{1}_{\{\overset{\triangle}{T}_{b-X(T_c)}<\infty\}}\vert \mathcal{F}_{T_c}'\right]\right],\\
&&\text{ by the tower property and the definition of the conditional measure }\PP(\cdot\vert \{T_c<\infty\}),\\
&=&\int d\overline{\QQ^c}(x_c)\QQ^{b-x_c}(A-x_c),\\
&&\text{by the strong Markov property \& Proposition~\ref{proposition:lemma_on_conditioning} (see below).}
\end{eqnarray*}
We now specify precisely how the strong Markov property and Proposition~\ref{proposition:lemma_on_conditioning} are applied here, this not being completely trivial. Recall from the end of Section~\ref{section:Notation-and-statement-of-result} the notation pertaining to the space $(\mathbb{D},\mathcal{H})$.

The probability space we will be working on is $(\{T_c<\infty\},\mathcal{F}_{\{T_c<\infty\}},\PP(\cdot\vert \{T_c<\infty\}))$, where $\mathcal{F}_{\{T_c<\infty\}}:=\{B\cap \{T_c<\infty\}:B\in \mathcal{F}\}$, and it is complete, since $(\Omega,\mathcal{F},\PP)$ is. Further, define (on $\{T_c<\infty\}$) $Y:=X(T_c)$; $Z:=\overset{\triangle}{X}$ and $f:\mathbb{R}\times\mathbb{D}\to\mathbb{R}$ by: $$f(x,\omega)=\mathbbm{1}_A(x+\omega(T_{b-x}(\omega)))\mathbbm{1}_{[0,\infty)}(T_{b-x}(\omega)),\quad (x,\omega)\in \mathbb{R}\times\mathbb{D},$$ where we let $\omega(\infty)=\omega(0)$ for definiteness.\footnote{The reader is cautioned not to confuse the mapping $f$, which is introduced here solely for the purposes of establishing how Proposition~\ref{proposition:lemma_on_conditioning} is applied in obtaining \eqref{lemma:overshoot_measures_constraint}, with the notation from Remark~\ref{remark:theAAset}. Indeed, the context will always make it clear which $f$ we are referring to.} 

Now, the random element $Z:(\{T_c<\infty\},\mathcal{F}_{\{T_c<\infty\}})\to (\mathbb{D},\mathcal{H})$ is independent of $\mathcal{G}:=\mathcal{F}_{T_c}'$, whereas the random element  $Y:(\{T_c<\infty\},\mathcal{F}_{\{T_c<\infty\}})\to (\mathbb{R},\mathcal{B}(\mathbb{R}))$ is measurable with respect to $\mathcal{F}_{T_c}'$. Measurability of $Y$ is a consequence of \cite[p. 5, 1.13 Proposition \& p. 9, 2.18 Proposition]{karatzas} and the D\'ebut Theorem \cite[p. 101, Theorem 6.7]{kallenberg} and measurability of $Z$ follows similarly. 

We next show that $f$ is $(\mathcal{B}(\mathbb{R})\otimes \mathcal{H})^\star/\mathcal{B}(\mathbb{R})$-measurable. First note that:
\begin{enumerate}
\item  $(x,\omega)\mapsto \omega+x$ is $\mathcal{B}(\mathbb{R})\otimes \mathcal{H}$/$\mathcal{H}$-measurable (in fact continuous, compare \cite[p. 328, 1.17 Proposition \& p. 329, 1.23 Proposition]{jacod}), hence $(\mathcal{B}(\mathbb{R})\otimes \mathcal{H})^\star$/$\mathcal{H}^\star$-measurable, by \cite[(2) on p. 23]{meyer}.
\item By the D\'ebut Theorem, for every $b\in\mathbb{R}$, $T_b$ is a stopping time of the augmented (with respect to \emph{any} probability measure) right-continuous modification of the canonical filtration $\mathbb{H}=(\mathcal{H}_t)_{t\geq 0}$ on $\mathbb{D}$ /where $\mathcal{H}_t$ is generated by the evaluation maps up to, and including, time $t$, $t\geq 0$/. Hence $(\omega\mapsto T_b(\omega))$ is $\mathcal{H}^\star$/$\mathcal{B}([0,\infty])$-measurable. 
\end{enumerate}
It follows that $(x,\omega)\mapsto T_b(\omega+x)=T_{b-x}(\omega)$ is $(\mathcal{B}(\mathbb{R})\otimes \mathcal{H})^\star$/$\mathcal{B}([0,\infty])$-measurable (as a composition). Next:
\begin{enumerate}
\item $(x,\omega)\mapsto(\omega, \mathbbm{1}_{[0,\infty)}(T_{b-x}(\omega))T_{b-x}(\omega))$ is $(\mathcal{B}(\mathbb{R})\otimes \mathcal{H})^\star$/$\mathcal{H}\otimes \mathcal{B}(\mathbb{R}_+)$-measurable.
\item $(\omega,t)\mapsto \omega(t)$ is $\mathcal{H}\otimes \mathcal{B}(\mathbb{R}_+)$/$\mathcal{B}(\mathbb{R})$-measurable (indeed, if $X$ is the coordinate process on $\mathbb{D}$, then this is the mapping $(\omega,t)\mapsto X(\omega,t)$, which is measurable by \cite[p. 5, Proposition~1.13]{karatzas}). 
\end{enumerate}
Therefore $(x,\omega)\mapsto \omega(T_{b-x}(\omega))$ is $(\mathcal{B}(\mathbb{R})\otimes \mathcal{H})^\star$/$\mathcal{B}(\mathbb{R})$-measurable (as a composition, with the above convention for $\omega(\infty)$). The required measurability of $f$ now follows from measurability of addition and multiplication. 

We are now in a position to apply Proposition~\ref{proposition:lemma_on_conditioning}. We have: 
\begin{eqnarray*}
&&\PP(T_c<\infty)\EE^{\PP(\cdot\vert \{T_c<\infty\})}[\EE^{\PP(\cdot\vert \{T_c<\infty\})}[f\circ (Y,Z)\vert \mathcal{F}_{T_c}']]=\\
&=&\PP(T_c<\infty)\EE^{\PP(\cdot\vert \{T_c<\infty\})}[(y\mapsto \EE^{\PP(\cdot\vert \{T_c<\infty\})}[f\circ (y,Z)])\circ X(T_c)],\text{ by Proposition~\ref{proposition:lemma_on_conditioning}},\\
&=&\!\!\!\int\! d\overline{\QQ^c}(y)\EE^{\PP(\cdot\vert \{T_c<\infty\})}[f\circ (y,Z)],\text{ by the Image Measure Theorem  \cite[p. 121, Theorem 4.1.11]{dudley}},\\
&&\text{since }\overline{\QQ^c}\text{ coincides with the (subprobability) law of }X(T_c)\text{ on }(\mathbb{R},\overline{\mathcal{B}(\mathbb{R})}^{\QQ^c}).
\end{eqnarray*}
Note here that we need to work with the (subprobability) law of $X(T_c)$ on the space $(\mathbb{R},\overline{\mathcal{B}(\mathbb{R})}^{\QQ^c})$ /rather than $(\mathbb{R},\mathcal{B}(\mathbb{R}))$/, since we only know the integrand to be measurable with respect to $\overline{\mathcal{B}(\mathbb{R})}^{\QQ^c}$. 

Now, by the strong Markov property, $Z$ is also identical in law under the measure $\PP(\cdot\vert \{T_c<\infty\})$ to $X$ under the measure $\PP$ on the space $(\mathbb{D},\mathcal{H})$ and hence on the space $(\mathbb{D},\mathcal{H}^\star)$  /the extension of a law to the universal completion being unique \cite[(1) on p. 23]{meyer}/. Moreover, for any real $d$ and Borel set $D\subset \mathbb{R}$, the mapping $g_{d,D}:\mathbb{D}\to\mathbb{R}$ given by $(\omega\mapsto \mathbbm{1}_{D}(\omega(T_{d}(\omega)))\mathbbm{1}_{[0,\infty)}(T_{d}(\omega)))$ is $\mathcal{H}^\star$/$\mathcal{B}(\mathbb{R})$-measurable, by the same reasoning as above. Hence:
\begin{eqnarray*}
\EE^{\PP(\cdot\vert \{T_c<\infty\})}[f\circ (y,Z)]&=&\EE^{\PP(\cdot\vert \{T_c<\infty\})}[\mathbbm{1}_{A-y}\circ \overset{\triangle}{X}(\overset{\triangle}{T}_{b-y})\mathbbm{1}_{[0,\infty)}\circ \overset{\triangle}{T}_{b-y}]\\
&=&\EE^{\PP(\cdot\vert \{T_c<\infty\})}[g_{b-y,A-y}\circ Z]\\
&=&\EE^{\PP}[g_{b-y,A-y}\circ X]=\QQ^{b-y}(A-y),
\end{eqnarray*}
as required. 
\end{proof}
\vspace{0.5cm}
\noindent
\emph{Proof of Proposition~\ref{proposition:from_one_to_all}}.
Given $\AA\cap \mathbb{R}^+\ne \emptyset$, we wish to show the inclusion $\mathbb{R}^+\subset\mathcal{A}$. Assume, again without loss of generality, that $X$ is c\`adl\`ag with certainty (rather than just $\PP$-a.s.).

\begin{enumerate}[(i)]
\item First observe that $\PP(T_x=\infty)=1$ for some $x>0$, precisely when $\PP(T_x=\infty)=1$ for all $x>0$. This follows either by the strong Markov property of L\'evy processes \cite[p. 68, Theorem 3.1]{kyprianou} and mathematical induction or, alternatively, one can appeal directly to \cite[p. 155, Proposition 24.14(i)]{sato}. Therefore it is sufficient to consider the case when $\PP(T_{x}<\infty)>0$ for all $x\in\mathbb{R}$. 
\item\label{non-random:strong:proof:ii} Claim: 
\begin{enumerate}[label=($\natural$),ref=($\natural$)]
\item \label{(natural)} If $b\in\AA$, then for every $c\in (0,b)$: either $c\in \AA$ or $(0,b-c]\cap \AA\ne\emptyset$.
\end{enumerate}
To show this, let $b\in \AA$, $c\in (0,b)$ and take any $A\in\mathcal{B}(\mathbb{R})$. By Lemma~\ref{lemma:an_integral_connection}:
\begin{equation}\label{eq:non-random:strong:1}
\QQ^b(A)=\int d\overline{\QQ^c}(x_c)\QQ^{b-x_c}(A-x_c).
\end{equation}
On the other hand, since $b\in \AA$:
\begin{equation}\label{eq:non-random:strong:2}
\QQ^b(A)=\PP(T_b<\infty)\delta_{f(b)}(A).
\end{equation}
Combining \eqref{eq:non-random:strong:1} and \eqref{eq:non-random:strong:2}, we have $\int d\overline{\QQ^c}(x_c)\QQ^{b-x_c}(A-x_c)=\PP(T_b<\infty)\delta_{f(b)}(A)$, from which we conclude that $\overline{\QQ^c}$-a.e. in $x_c\in\mathbb{R}$, $Q^{b-x_c}$ assigns all its mass to $\{f(b)-x_c\}$. (Suppose not, then with $\overline{\QQ^c}$-positive measure in $x_c\in\mathbb{R}$, $\QQ^{b-x_c}(\mathbb{R}\backslash\{f(b)-x_c\})>0$, and hence $Q^b(\mathbb{R}\backslash \{f(b)\})>0$, a contradiction.) 

Next, if $b'\in \AA$ and $c'\in (0,b']$:
\begin{enumerate}[label=(*),ref=(*)]
\item \label{(*)} $\QQ^{c'}$ assigns all its mass to $[c',b')\cup \{f(b')\}$.
\end{enumerate}
Therefore $c\in \mathcal{A}$, or $\QQ^c$ cannot ascribe all its mass to $\{f(b)\}$ and hence $\QQ^c([c,b))>0$. In the latter case, for some $x_c\in  [c,b)$, $\QQ^{b-x_c}$ is carried by $\{f(b)-x_c\}$, whence $b-x_c\in \AA\cap (0,b-c]$. 
\item Let $x_0:=\inf \mathcal{A}\cap \mathbb{R}^+$. Then $x_0=0$. Indeed, if not, then \ref{(natural)} of \ref{non-random:strong:proof:ii}, applied to some $[x_0,\infty)\cap \mathcal{A}\ni b<3x_0/2$ and $c=3x_0/4$ (say), yields a contradiction. Therefore there exists a decreasing sequence $(x_n)_{n\in \mathbb{N}}$ in $\mathcal{A}\cap\mathbb{R}^+$ converging to $0$. 
\item Claim: $\mathcal{A}$ is dense in $\mathbb{R}$. If $f(x_n)\to 0$ as $n\to\infty$, this is obvious, since, 
\begin{enumerate}[label=(**),ref=(**)]
\item \label{(**)} with any $x\in \mathcal{A}$, $\cup_{n\in \mathbb{N}_0}[x+nf(x),(n+1)f(x)]\subset\mathcal{A}$, 
\end{enumerate}
by the strong Markov property and mathematical induction. Suppose the nonincreasing sequence $(f(x_n))_{n\in\mathbb{N}_0}$ does not converge to $0$. Then there is an $\epsilon>0$ and a natural $N$, such that $f(x_n)\geq \epsilon$ and $x_n<\epsilon$ for all $n\geq N$. In particular, by \ref{(*)}, $f(x_n)=f(x_N)$ for all $n\geq N$. Therefore $[x_n,f(x_N)]\subset\AA$ for all $n\geq N$ by \ref{(**)}.  Therefore $[0,f(x_N)]\subset \AA$ and upon exceeding any positive level less than or equal to $f(x_N)$ we land at $f(x_N)$ a.s. Hence, by the strong Markov property and mathematical induction, $\mathcal{A}=\mathbb{R}$.
\item So we may assume $\AA$ is dense. Now we use quasi-left-continuity of L\'evy processes \cite[p. 21, Proposition 7]{bertoin} as follows. Take any $x\in\mathbb{R}^+$ and a sequence $\AA\cap (0,x)\supset (x_n)_{n\geq 1}\uparrow x$. Introduce the $\mathbb{F}$-stopping time $S:=\inf \{t\geq 0: \overline{X}_t\geq x\}$. We then have $T_{x_n}\uparrow S$ (as $n\to\infty$). By quasi-left-continuity, it follows that $\lim_{n\to\infty}X(T_{x_n})=X(S)$ $\PP$-a.s. on $\{S<\infty\}$. Therefore, in fact, $S=T_x$ $\PP$-a.s. on $\{S<\infty\}$ (and hence on $\{T_x<\infty\}$), and, moreover $X(T_x)=\lim_{n\to\infty}f(x_n)$ $\PP$-a.s. on $\{T_x<\infty\}$. But this means, precisely, that $x\in\AA$. 
\end{enumerate}
The proof is complete.\qed

Finally we can combine the above into a proof of Theorem~\ref{theorem:non-random-overshoots}.
\vspace{0.5cm}
\newline
\emph{Proof of Theorem~\ref{theorem:non-random-overshoots}}. The statement is essentially contained in Propositions~\ref{proposition:Ptriviality} and~\ref{proposition:from_one_to_all}.  We only have to worry about \ref{non-random:c} and \ref{non-random:d}, since so far we have only considered the stopping times $T_x$. 

Now, \ref{non-random:c} implies for some $f(x)>0$, $X(\hat{T}_x)=f(x)$ $\PP$-a.s. on $\{\hat{T}_x<\infty\}$, therefore $X(T_{f(x)})=f(x)$ $\PP$-a.s. on $\{T_{f(x)}<\infty\}$ and hence \ref{non-random:a}. Conversely, \ref{non-random:e} implies \ref{non-random:d} by sample path right-continuity. \qed

\begin{remark}
Theorem~\ref{theorem:non-random-overshoots} characterizes the class of L\'evy processes for which overshoots are known \emph{a priori} and are non-random. Moreover, the original motivation for this investigation is validated by the fact that upwards skip-free L\'evy chains admit a fluctuation theory, which is just as explicit, almost (but not entirely) analogous to the spectrally negative case and which embeds (existing) results for right-continuous random walks into continuous time. These findings, however, are deferred to a forthcoming paper \cite{vidmar:fluctuation_theory}.
\end{remark}

\bibliographystyle{plain}
\bibliography{Biblio_non-random_overshoots}

\appendix
\section{Two lemmas on conditioning}\label{appendix:conditioning_lemmas}
Let $(\Omega,\mathcal{F},\PP)$ be a probability space. Recall that the symbol $\perp$ is used to indicate stochastic independence relative to the probability measure $\PP$, whereas the completion of a $\sigma$-field $\mathcal{S}$ relative to the measure $\mu$  is denoted $\overline{\mathcal{S}}^\mu$, $\overline{\mu}$ being the unique extension of $\mu$ to $\overline{\mathcal{S}}^\mu$.

\begin{proposition}[Basic lemma on conditioning]\label{proposition:lemma_on_conditioning_basic}
Let $Y:(\Omega,\mathcal{F}) \to (S,\mathcal{S})$ and $Z:(\Omega,\mathcal{F})\to (T,\mathcal{T})$ be two random elements, and $\mathcal{G}$ any sub-$\sigma$-algebra of $\mathcal{F}$, such that $\sigma(Y)\subset \mathcal{G}$ and $\sigma(Z)\perp \mathcal{G}$. Let $f$ be any bounded (or nonnegative, or nonpositive) $\mathcal{S}\otimes \mathcal{T}$/$\mathcal{B}([-\infty,+\infty])$-measurable mapping. Then for any $y\in S$, $f \circ (y,Z)$ is $\mathcal{F}$/$\mathcal{B}([-\infty,+\infty])$-measurable, $g:=(y\mapsto \EE[f\circ (y,Z)])$ is $\mathcal{S}$/$\mathcal{B}([-\infty,+\infty])$-measurable and, $\PP$-a.s., 

\begin{equation}\label{eq:loc_basic}
\EE[f\circ (Y,Z)\vert \mathcal{G}]=g\circ Y.
\end{equation}
\end{proposition}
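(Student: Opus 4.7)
The plan is a standard functional monotone class argument on $f$, reducing everything to indicators of measurable rectangles, where the independence of $\sigma(Z)$ from $\mathcal{G}$ yields the identity by a direct computation.

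First I would settle the measurability claims. For each fixed $y\in S$, the section $f(y,\cdot)$ is $\mathcal{T}/\mathcal{B}([-\infty,+\infty])$-measurable by the standard section theorem for product $\sigma$-algebras, hence $f\circ(y,Z)=f(y,\cdot)\circ Z$ is $\mathcal{F}$-measurable. For $g$, write
$$g(y)=\EE[f\circ(y,Z)]=\int f(y,z)\,d\PP_Z(z),$$
with $\PP_Z$ the law of $Z$; since $\PP_Z$ is a finite measure and $f$ is bounded (or of one sign), Fubini--Tonelli gives that $g$ is $\mathcal{S}/\mathcal{B}([-\infty,+\infty])$-measurable.

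For the identity, let $\mathcal{H}$ denote the class of bounded $\mathcal{S}\otimes\mathcal{T}$-measurable $f$ for which \eqref{eq:loc_basic} holds. Then $\mathcal{H}$ is a vector space containing the constants, and is closed under bounded monotone limits: for the left-hand side use the bounded/dominated convergence theorem for conditional expectations, and for the right-hand side the fact that $g_n(y)=\EE[f_n\circ(y,Z)]$ converges pointwise to $g(y)$ by dominated convergence, preserving measurability in $y$. The products $A\times B$ with $A\in\mathcal{S}$ and $B\in\mathcal{T}$ form a $\pi$-system generating $\mathcal{S}\otimes\mathcal{T}$, so by the functional monotone class theorem it suffices to verify \eqref{eq:loc_basic} for $f=\mathbbm{1}_{A\times B}$. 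For such $f$, $f\circ(Y,Z)=\mathbbm{1}_{\{Y\in A\}}\mathbbm{1}_{\{Z\in B\}}$; since $\{Y\in A\}\in\sigma(Y)\subset\mathcal{G}$ and $\sigma(Z)\perp\mathcal{G}$, pulling out the $\mathcal{G}$-measurable factor and using independence of $\mathbbm{1}_{\{Z\in B\}}$ from $\mathcal{G}$ yields
$$\EE[f\circ(Y,Z)\mid\mathcal{G}]=\mathbbm{1}_{\{Y\in A\}}\,\PP(Z\in B)\quad\PP\text{-a.s.},$$
whereas $g(y)=\mathbbm{1}_A(y)\PP(Z\in B)$, so $g\circ Y$ matches. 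The extension from bounded $f$ to nonnegative (respectively nonpositive) $f$ is handled by applying the bounded case to the truncations $f\wedge n$ (respectively $f\vee(-n)$) and passing to the limit by monotone convergence, both for the conditional expectation and inside the integral defining $g$.

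The main obstacle, such as it is, lies in correctly threading the measurability of $g$ through the approximation procedure and in invoking the right version of the monotone class theorem; the probabilistic content --- use of independence --- enters only once, in the base case, and is completely transparent. No regularity of $\mathcal{G}$, $\mathcal{S}$, or $\mathcal{T}$ beyond being abstract measurable spaces is needed.
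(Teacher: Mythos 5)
Your proposal is correct and follows essentially the same route as the paper: a functional monotone class argument reducing \eqref{eq:loc_basic} to indicators of measurable rectangles $A\times B$, where the identity follows from ``taking out what is known'' together with the independence of $\sigma(Z)$ from $\mathcal{G}$. The only difference is that you spell out the measurability of the sections and of $g$ (via the section theorem and Tonelli) and the truncation step for unbounded one-signed $f$, which the paper leaves implicit.
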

\begin{proof}
Linearity and monotonicity of conditional expectation \cite[p. 143]{cinlar} show that the class of functions $f$ for which the conclusion of the lemma holds true is a monotone class. By the Functional Monotone Class Theorem \cite[p. 10, Theorem~2.19]{cinlar}, it is then sufficient to check its validity for $f=\mathbbm{1}_\Lambda$ with $\Lambda$ belonging to the $\pi$-system $\{A\times B:(A,B)\in \mathcal{S}\times \mathcal{T}\}$ generating $\mathcal{S}\otimes \mathcal{T}$. In that case \eqref{eq:loc_basic} (measurability being clear) follows at once by independence of $Y$ and $Z$ \cite[p. 174, Theorem~8.14vi)]{klenke} and the ``taking out what is known'' property (conditional determinism \cite[p. 144, Theorem~1.10a)]{cinlar}) of conditional expectation.
\end{proof}
There is a modification of this proposition, which allows for completions, to wit:

\begin{proposition}[Lemma on conditioning with completions]\label{proposition:lemma_on_conditioning}
Assume now $(\mathcal{F},\PP)$ is complete. Let $Y:(\Omega,\mathcal{F}) \to (S,\mathcal{S})$ and $Z:(\Omega,\mathcal{F})\to (T,\mathcal{T})$ again be two random elements, and $\mathcal{G}$ any sub-$\sigma$-algebra of $\mathcal{F}$, such that $\sigma(Y)\subset \mathcal{G}$ and $\sigma(Z)\perp \mathcal{G}$. Let $f$ be any bounded (or nonnegative, or nonpositive) $\overline{\mathcal{S}\otimes \mathcal{T}}^{\PP_{(Y,Z)}}$/$\mathcal{B}([-\infty,+\infty])$-measurable mapping. Then:
\begin{enumerate}[(i)]
\item\label{loc:i} $(Y,Z)$ is $\mathcal{F}$/$\overline{\mathcal{S}\otimes \mathcal{T}}^{\PP_{(Y,Z)}}$-measurable,
\item\label{loc:ii} $Y$ (respectively $Z$) is $\mathcal{F}$/$\overline{\mathcal{S}}^{\PP_Y}$-measurable (respectively  $\mathcal{F}$/$\overline{\mathcal{T}}^{\PP_Z}$-measurable),
\item\label{loc:iii} $\overline{\PP_Y}$-a.s. in $y\in S$, $f \circ (y,Z)$ is $\mathcal{F}$/$\mathcal{B}([-\infty,+\infty])$-measurable,
\item\label{loc:iv} $(y\mapsto \EE[f\circ (y,Z)])$ is $\overline{\mathcal{S}}^{\PP_Y}$/$\mathcal{B}([-\infty,+\infty])$-measurable  (defining $\EE[f\circ (y,Z)]$ to be, say, $0$, on the $\overline{\PP_Y}$-null set in $y\in S$, on which $f \circ (y,Z)$ is not $\mathcal{F}$/$\mathcal{B}([-\infty,+\infty])$-measurable)
\end{enumerate}
and, $\PP$-a.s., 
\begin{equation}\label{eq:loc}
\EE[f\circ (Y,Z)\vert \mathcal{G}]=(y\mapsto \EE[f\circ (y,Z)])\circ Y.
\end{equation}
\end{proposition}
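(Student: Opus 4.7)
The plan is to reduce Proposition~\ref{proposition:lemma_on_conditioning} to its uncompleted counterpart, Proposition~\ref{proposition:lemma_on_conditioning_basic}, by replacing $f$ with an $\mathcal{S}\otimes\mathcal{T}$-measurable representative $g$ that agrees with $f$ off a $\PP_{(Y,Z)}$-null set, and then exploiting the product form $\PP_{(Y,Z)}=\PP_Y\otimes\PP_Z$---which holds because $\sigma(Y)\subset\mathcal{G}$ and $\sigma(Z)\perp\mathcal{G}$ force $Y\perp Z$.

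First I would dispose of \ref{loc:i} and \ref{loc:ii} as routine completion-bookkeeping: $(Y,Z)$, $Y$, and $Z$ are $\mathcal{F}$-measurable into the uncompleted image $\sigma$-fields; any $\PP_{(Y,Z)}$-negligible set in $\mathcal{S}\otimes\mathcal{T}$ has $(Y,Z)$-preimage of $\PP$-measure zero, hence, by $\PP$-completeness of $\mathcal{F}$, lies in $\mathcal{F}$; enlarging by such sets preserves $\mathcal{F}$-measurability. The same argument handles $Y$ and $Z$ separately.

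The substantive content is \ref{loc:iii}, \ref{loc:iv} and \eqref{eq:loc}. Using the standard completion lemma, pick an $\mathcal{S}\otimes\mathcal{T}$-measurable $g$ (of the same sign/boundedness as $f$) together with $N\in\mathcal{S}\otimes\mathcal{T}$ such that $\{f\ne g\}\subset N$ and $\PP_{(Y,Z)}(N)=0$. Because $\PP_{(Y,Z)}=\PP_Y\otimes\PP_Z$, Fubini--Tonelli yields $\overline{\PP_Y}(S\setminus S_0)=0$, where $S_0:=\{y\in S:\PP_Z(N_y)=0\}\in\mathcal{S}$ and $N_y$ is the $y$-section of $N$. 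For each $y\in S_0$, the random variable $f(y,Z(\cdot))$ coincides with the $\mathcal{F}$-measurable $g(y,Z(\cdot))$ outside the $\PP$-null set $Z^{-1}(N_y)\in\mathcal{F}$, hence is itself $\mathcal{F}$-measurable by completeness, establishing \ref{loc:iii}. The map $y\mapsto\EE[g(y,Z)]$ is $\mathcal{S}$-measurable by Proposition~\ref{proposition:lemma_on_conditioning_basic} applied to $g$, and agrees with $y\mapsto\EE[f(y,Z)]$ on $S_0$, so the latter is $\overline{\mathcal{S}}^{\PP_Y}$-measurable under the prescribed convention on the exceptional $\overline{\PP_Y}$-null set, giving \ref{loc:iv}.

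For \eqref{eq:loc}, observe that $f\circ(Y,Z)=g\circ(Y,Z)$ $\PP$-a.s.\ (the exceptional event $(Y,Z)^{-1}(N)$ has $\PP$-measure $\PP_{(Y,Z)}(N)=0$), so $\EE[f\circ(Y,Z)\mid\mathcal{G}]=\EE[g\circ(Y,Z)\mid\mathcal{G}]$ $\PP$-a.s.; Proposition~\ref{proposition:lemma_on_conditioning_basic} applied to $g$ identifies the right-hand side with $(y\mapsto\EE[g(y,Z)])\circ Y$; and composing the $\PP_Y$-a.s.\ equality of $y\mapsto\EE[g(y,Z)]$ and $y\mapsto\EE[f(y,Z)]$ (on $S_0$) with $Y$ yields the claimed identity. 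The main obstacle is not deep---it is the orderly interlocking of the Fubini step with the various completion arguments---but it crucially rests on recognising that the hypotheses force $Y\perp Z$ and hence that $\PP_{(Y,Z)}$ factors.
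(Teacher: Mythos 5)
Your proposal is correct, and it takes a recognisably different route from the paper's. The paper proves \eqref{eq:loc} by running the Functional Monotone Class Theorem a second time, directly over the completed $\sigma$-field $\overline{\mathcal{S}\otimes \mathcal{T}}^{\PP_{(Y,Z)}}$, using the $\pi$-system $\{A\times B:(A,B)\in\mathcal{S}\times\mathcal{T}\}\cup\mathcal{N}$ ($\mathcal{N}$ the $\overline{\PP_{(Y,Z)}}$-null sets) and checking the identity separately on rectangles (where it is Proposition~\ref{proposition:lemma_on_conditioning_basic}) and on null sets (where both sides vanish a.s.); for \ref{loc:iii} and \ref{loc:iv} it invokes the identity $\overline{\overline{\mathcal{S}}^{\PP_Y}\otimes \overline{\mathcal{T}}^{\PP_Z}}^{\overline{\PP_Y}\times \overline{\PP_Z}}=\overline{\mathcal{S}\otimes \mathcal{T}}^{\PP_{(Y,Z)}}$ together with section and Tonelli theorems for completed product measures. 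You instead take a single $\mathcal{S}\otimes\mathcal{T}$-measurable representative $g$ with $\{f\ne g\}\subset N$, $\PP_{(Y,Z)}(N)=0$, transfer everything to $g$ via the basic proposition, and control the discrepancy through the sections $N_y$ and the factorization $\PP_{(Y,Z)}=\PP_Y\otimes\PP_Z$ (correctly deduced from $\sigma(Y)\subset\mathcal{G}$, $\sigma(Z)\perp\mathcal{G}$). Both arguments rest on the same Fubini fact---that $\PP_Y$-a.e.\ section of a $\PP_{(Y,Z)}$-null set is $\PP_Z$-null---but yours buys a more elementary and self-contained presentation (no second monotone-class pass, no appeal to the completed-product-$\sigma$-field theorems), at the cost of invoking the modification lemma and, strictly speaking, yielding \ref{loc:iii} and \ref{loc:iv} with an exceptional set in $\mathcal{S}$ rather than merely in $\overline{\mathcal{S}}^{\PP_Y}$, which is in fact a slightly stronger conclusion. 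All the individual steps you outline (completeness of $\mathcal{F}$ for \ref{loc:i}--\ref{loc:iii}, measurability of $y\mapsto\PP_Z(N_y)$, the a.s.\ substitutions in \eqref{eq:loc}) are sound.
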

\begin{proof}
Throughout we use the Image-Measure Theorem \cite[p. 121, Theorem 4.1.11]{dudley}.

First note that $(Y,Z)$ is $\mathcal{F}$/$\mathcal{S}\otimes \mathcal{T}$-measurable, hence $\mathcal{F}$/$\overline{\mathcal{S}\otimes \mathcal{T}}^{\PP_{(Y,Z)}}$-measurable, since $\mathcal{F}$ is $\PP$-complete. Similarly for $Y$ and $Z$. (In both cases apply a generating class argument combining \cite[pp. 101-102, Theorem 3.3.1 and Propositions 3.3.2 \& 3.3.3]{dudley}, cf. also \cite[p. 21, Exercise 8]{kallenberg}.) Thus we have \ref{loc:i} and \ref{loc:ii}. 

Next, the measure spaces $(S,\overline{\mathcal{S}}^{ \PP_Y},\overline{\PP_Y})$ and $(T,\overline{\mathcal{T}}^{ \PP_Z},\overline{\PP_Z})$ are complete and, by \cite[p. 543, Theorem 23.23]{yeh}, $\overline{\overline{\mathcal{S}}^{ \PP_Y}\otimes \overline{\mathcal{T}}^{\PP_Z}}^{\overline{\PP_Y}\times \overline{\PP_Z}}=\overline{\mathcal{S}\otimes \mathcal{T}}^{\PP_{(Y,Z)}}$, since $\PP_Y\times \PP_Z=\PP_{(Y,Z)}$, owing to independence of $Y$ and $Z$. It follows that $f$ is $\overline{\overline{\mathcal{S}}^{ \PP_Y}\otimes \overline{\mathcal{T}}^{\PP_Z}}^{\overline{\PP_Y}\times \overline{\PP_Z}}$/$\mathcal{B}([-\infty,+\infty])$-measurable. The latter allows to conclude \ref{loc:iii} and \ref{loc:iv}, as follows. 

First, by \cite[p. 545, Theorem 23.25(b)]{yeh}, $f(y,\cdot)$ is $\overline{\mathcal{T}}^{\PP_Z}$/$\mathcal{B}([-\infty,+\infty])$-measurable, $\overline{\PP_Y}$-a.s. in $y\in S$. Coupled with \ref{loc:ii}, this yields  \ref{loc:iii}. Second, note that for any $y\in S$ for which $f(y,\cdot)$ is $\overline{\mathcal{T}}^{ \PP_Z}$/$\mathcal{B}([-\infty,+\infty])$-measurable,  $\EE[f\circ (y,Z)]=\int f(y,\cdot)d\overline{\PP_Z}$. Thus \ref{loc:iv} follows by Tonelli's Theorem \cite[p. 546, Theorem 23.26(a)]{yeh}. 

Finally we wish to establish \eqref{eq:loc}. As in Lemma~\ref{eq:loc_basic}, linearity and monotonicity of conditional expectation show that the class of $\overline{\mathcal{S}\otimes \mathcal{T}}^{\PP_{(Y,Z)}}$/$\mathcal{B}([-\infty,+\infty])$-measurable functions $f$ for which \eqref{eq:loc} holds is a monotone class. By the Functional Monotone Class Theorem it will thus be sufficient to consider $f=\mathbbm{1}_\Lambda$ with $\Lambda$ belonging to the $\pi$-system $\{A\times B:(A,B)\in \mathcal{S}\times \mathcal{T}\}\cup \mathcal{N}$, where $\mathcal{N}$ is the set of all $\overline{ \PP_{(Y,Z)}}$-null sets, generating $\overline{\mathcal{S}\otimes \mathcal{T}}^{\PP_{(Y,Z)}}$ \cite[p. 102, Proposition 3.3.2]{dudley}. 

Now, for $\Lambda$ belonging to $\{A\times B:(A,B)\in \mathcal{S}\times \mathcal{T}\}$, \eqref{eq:loc} is the contents of Proposition~\ref{proposition:lemma_on_conditioning_basic}. On the other hand suppose $\Lambda$ is $\overline{\PP_{(Y,Z)}}$-null. Then, $\PP$-a.s., the left-hand side of \eqref{eq:loc} is equal to $0$, since $\overline{\PP_{(Y,Z)}}$ coincides with the law of $(Y,Z)$ on $\overline{\mathcal{S}\otimes \mathcal{T}}^{\PP_{(Y,Z)}}$ and hence $\EE[f\circ (Y,Z)]=\int fd\overline{\PP_{(Y,Z)}}=0$. The right-hand side of \eqref{eq:loc} is nonnegative. To show that it too is $0$, $\PP$-a.s., compute again its expectation using Tonelli's Theorem \cite[p. 546, Theorem 23.26]{yeh} and the fact that by \cite[p. 543, Theorem 23.23]{yeh} $\overline{\PP_{(Y,Z)}}=\overline{\overline{\PP_Y}\times\overline{\PP_z}}$: 
\begin{equation}\label{eq:appendix:non-random:completions}
\int d\overline{\PP_Y}(y)\int d\overline{\PP_Z}(z)f(y,z)=\int d\overline{\overline{\PP_Y}\times \overline{\PP_Z}}f=\int d\overline{\PP_{(Y,Z)}}f=0.
\end{equation}
Thus indeed also the right-hand side of \eqref{eq:loc} equals $0$, $\PP$-a.s., and the proof is complete.
\end{proof}

\end{document}